\title{Interpolation and random interpolation in de Branges-Rovnyak spaces}
\author[A. Hartmann]{Andreas Hartmann}
\address{Univ. Bordeaux, CNRS, Bordeaux INP, IMB, UMR 5251, F-33400 Talence, France}
\email{andreas.hartmann@math.u-bordeaux.fr}
\author[G. Lamberti]{Giuseppe Lamberti}
\address{Univ. Bordeaux, CNRS, Bordeaux INP, IMB, UMR 5251, F-33400 Talence, France} 
\email{giuseppe.lamberti@math.u-bordeaux.fr}
\thanks{This work was partially supported by ANR-24-CE40-5470.}
\subjclass[2020]{30H45, 30H10, 30H50, 60G55}
\begin{document}

\begin{abstract}
The aim of this paper is to characterize universal and multiplier interpolating sequences for de Branges-Rovnyak spaces $\cH(b)$ where the defining function $b$ is a general non-extreme rational function.
Our results carry over to recently introduced higher order local Dirichlet spaces and thus generalize previously known results in classical local Dirichlet spaces. In this setting, we also investigate random interpolating sequences with prescribed radii, providing a $0-1$ law.
\end{abstract}

\maketitle

\section{Introduction}
Let $\cH$ be a reproducing kernel Hilbert space on a domain $\Omega \subset \bC^d$ and let $k$ be its reproducing kernel. Using the notation $k_{\lambda}(z)=k(\lambda,z)$, $\lambda,z\in\Omega$, we say that a sequence $\Lambda=(\lambda_n)_{n \in \bN}$ on $\Omega$ is \textit{(universal) interpolating} for $\cH$ (or $\cH$-interpolating) if the operator
\begin{align*}
    R_\Lambda \colon \cH & \longrightarrow  \ell^2  \\
    f & \mapsto  \Big( \frac{f(\lambda_n)} {\|k_{\lambda_n}\|_{\cH}} \Big)_{n \in \bN} 
\end{align*}
is bounded and surjective. We need to introduce another notion of interpolation. For this purpose we recall the \textit{multiplier algebra} of $\cH$ as
\[
\cM (\cH) := \{ \varphi:\Omega \to \mathbb C : \varphi f \in \cH \text{ for all } f \in \cH \}.
\]
We mention, among other properties of this algebra, that every function that belongs to $\cM (\cH)$ must be bounded on $\Omega$. We say that a sequence $\Lambda \subset \Omega$ is \textit{multiplier interpolating} for $\cH$ if for every sequence $(a_n)_{n \in \bN} \in \ell^\infty$ there exists $\varphi \in \cM (\cH)$ such that $\varphi(\lambda_n)=a_n$ for every $n \in \bN$. In the literature these sequences are also called interpolating for $\cM (\cH)$ or $\cM(\cH)$-interpolating.

In general, every multiplier interpolating sequence is also a universal interpolating sequence (see \cite{seip04}*{Chapter 2, Proposition 1}), although the converse is not always true (e.g. the Bergman space --- this is common knowledge today, but it was first proved by D. Amar and \'E. Amar \cite{Amar78} in the late 70's). However the reverse implication does hold for any reproducing kernel Hilbert space with the Nevanlinna-Pick property. For a recent account on this theory, we refer the reader to \cite{Agler02}.

We now proceed to describing more in details the setting of our work. Consider the open unit disk $\bD:= \{z \in \bC: |z|<1\}$ and its topological boundary $\bT:=\partial \bD$. The \textit{pseudohyperbolic distance} $\rho$ is defined as
\[
\rho(z,w) = \left| \frac{z-w}{1-\ol{w}z} \right| \quad z,w \in \bD.
\]
An example of a space where the two notions of interpolating sequences introduced before are equivalent is the Hardy space $H^2$, the space of holomorphic function $f$ on $\bD$ such that
\[
\|f\|^2_{H^2}:= \sup_{0<r<1} \int_\bT |f(r\zeta)|^2 dm(\zeta) < \infty,
\]
where $dm$ is the normalized Lebesgue measure on $\bT$. It is not difficult to see that the multiplier algebra of $H^2$ is precisely $H^\infty$, the space of bounded holomorphic functions on $\bD$.\\
In \cite{carleson58}, Carleson proved that a sequence $\Lambda$ is interpolating for $H^\infty$ if and only if it satisfies the so-called \textit{Carleson condition}, i.e.
\begin{equation} \tag{C} \label{Carleson_condition}
    \inf_{n \in \bN} \prod_{k \neq n} \rho(\lambda_n,\lambda_k) > 0.
\end{equation}
Later on Shapiro and Shields \cite{Shapiro61} proved that the same condition characterizes also interpolating sequences for $H^2$ (and more generally in $H^p$).

We now introduce the space of holomorphic functions that is the focus of our study. Consider $b \in H^\infty$ such that $\|b\|_{H^\infty}\leq1$. The \textit{de Branges-Rovnyak space} $\cH(b)$ is defined as the reproducing kernel Hilbert space of holomorphic functions on $\bD$ with the reproducing kernel given by
\[
k_w^b (z) = \frac{1-\ol{b(w)}b(z)}{1-\ol{w}z} \quad z,w \in \bD.
\]

Another way of defining $\cH(b)$ is to consider it as the range space $(I-T_bT_{\bar{b}})^{1/2}H^2$, where $T_b:H^2\to H^2$ is the Toeplitz operator with symbol $b$: $T_bf=P_+(bf)$, and $P_+$ is the Riesz projection. 
Then a natural norm in $\cH(b)$ is  given by the range norm, but we will not explore this further here and introduce below an equivalent and more transparent norm in the setting we are interested in. These spaces were first introduced by de Branges and Rovnyak \cite{dBR66}. Since then, they have been well-studied and are regarded as fundamental within the theory of holomorphic functions, playing a significant role in various areas of function theory and operator theory. For a recent and thorough treatment of $\cH (b)$ spaces we refer the reader to \cite{Fricain15a}.

We recall that whenever $\|b\|_{H^\infty} < 1$, the space $\cH(b)$ actually coincides with $H^2$ equipped with an equivalent norm. For this reason we focus our attention to $\cH (b)$ spaces where the function $b$ has norm equal to $1$. In addition we require that $b$ is a rational function, and a \textit{non-extreme} point of the unit ball of $H^\infty$, the latter meaning that
\[
\int_\bT \log (1-|b(\zeta)|) dm(\zeta) > -\infty.
\]
When $b$ is non-extreme, then there exists a unique outer function $a \in H^\infty$, $\|a\|_{H^\infty}=1$, such that $a(0)>0$ and
\[
|a(\zeta)|^2 + |b(\zeta)|^2 = 1 \quad \text{for a.e. } \zeta \in \bT.
\]
We refer to $a$ as the \textit{Pythagorean mate} of $b$ and we call $(a,b)$ a \textit{pair}. By the Fej\'er-Riesz theorem, whenever $b$ is rational, then $a$ is also rational, and we will then call $(a,b)$ a \textit{rational pair}. From now on we always assume that $a$ and $b$ are outer. We mention that the Fej\'er-Riesz theorem also gives a recipe to construct the Pythagorean mate $a$ but this requires the localization of roots of polynomials of degree twice the degree of $b$. Here is an explicit example which can be found in \cite{Fricain16}: if $b(z)=(1-z)^2/4$ then $a(z)=c(1+z)(z-(3+2\sqrt{2}))$ for some suitable constant $c$. It it not difficult to see that if $(a,b)$ is a rational pair then $(a,b)$ is also a \textit{corona pair}, which means that
\[
\inf_{z \in \bD} \big(|a(z)|^2 + |b(z)|^2\big) > 0.
\]

Our main result strongly relies on the following description of this class of spaces. This was proved in \cite{Costara13}*{Lemma 4.3} (see also \cite{Blandigneres15} for another proof).

\begin{thm} \label{decomposition}
    Let $(a,b)$ be a rational pair and $\zeta_1, \ldots, \zeta_l$ the zeroes of $a$ on $\bT$ with multiplicity $m_1, \ldots, m_l$. Then
    \begin{equation} \label{eq:decomposition}
    \cH(b) = \Big( \prod_{j=1}^l (z-\zeta_j)^{m_j} \Big) H^2 \oplus P_{N-1},
    \end{equation}
    where $N=\sum_j m_j$ and $P_{N-1}$ is the space of polynomials of degree at most $N-1$.
\end{thm}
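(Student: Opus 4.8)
The plan is to combine Sarason's functional description of $\cH(b)$ in the non-extreme case with the rationality of the pair $(a,b)$. Put $d(z)=\prod_{j=1}^{l}(z-\zeta_j)^{m_j}$, so that $\deg d=N$, and set $\psi:=bd/a$. Since $a$ is outer it has no zeros in $\bD$, and $b$ has no poles in $\bD$, so $\psi$ is analytic on $\bD$; on $\bT$ the only possible singularities of $b/a$ sit at the $\zeta_j$, where $a$ vanishes to order exactly $m_j$ while $|b(\zeta_j)|^2=1-|a(\zeta_j)|^2=1\neq 0$. Hence $\psi$ extends analytically across $\overline{\bD}$ with $\psi(\zeta_j)\neq 0$ for every $j$; in particular $\psi\in H^\infty$ is rational with all poles in $\{|z|>1\}$, and $bd=a\psi$. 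I also record the self-inversive identity $\overline{d(\zeta)}=\gamma\,\zeta^{-N}d(\zeta)$ on $\bT$, where $\gamma:=\prod_j(-\overline{\zeta_j})^{m_j}$ is unimodular (this uses $|\zeta_j|=1$). The tools I use, all standard \cite{Fricain15a}: as $b$ is non-extreme, $\cH(b)$ contains all polynomials and they are dense in it; $f\in\cH(b)$ if and only if $f\in H^2$ and $T_{\bar b}f\in\operatorname{ran}T_{\bar a}$, and then $\|f\|_{\cH(b)}^2=\|f\|_2^2+\|f^+\|_2^2$, where $f^+\in H^2$ is the unique solution of $T_{\bar a}f^+=T_{\bar b}f$ (unique because $a$ outer makes $T_{\bar a}$ injective).

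For the inclusion $\bigl(\prod_{j}(z-\zeta_j)^{m_j}\bigr)H^2\oplus P_{N-1}\subseteq\cH(b)$ I treat the two summands separately. First, $P_{N-1}\subseteq\cH(b)$ because all polynomials lie in $\cH(b)$. Next, for $h\in H^2$, the identity $bd=a\psi$ together with the self-inversive relation give $\overline{b}\,d=\overline\gamma\,\overline{a}\,\overline{\psi}\,\zeta^{N}$ on $\bT$, whence
\[
T_{\bar b}(dh)=P_+\!\bigl(\overline{b}\,d\,h\bigr)=\overline\gamma\,P_+\!\bigl(\overline{a}\,(\overline{\psi}\,\zeta^{N}h)\bigr)=\overline\gamma\,T_{\bar a}\!\bigl(P_+(\overline{\psi}\,\zeta^{N}h)\bigr),
\]
the term $P_+\!\bigl(\overline{a}\,P_-(\overline{\psi}\,\zeta^{N}h)\bigr)$ vanishing because $\overline{a}\in\overline{H^\infty}$, so $\overline{a}\cdot\overline{H^2_0}\subseteq\overline{H^2_0}$. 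As $\psi\in H^\infty$ we have $P_+(\overline{\psi}\,\zeta^{N}h)\in H^2$, so $T_{\bar b}(dh)\in\operatorname{ran}T_{\bar a}$, i.e. $dh\in\cH(b)$, and moreover $(dh)^+=\overline\gamma\,P_+(\overline{\psi}\,\zeta^{N}h)$.

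The reverse inclusion $\cH(b)\subseteq dH^2\oplus P_{N-1}$ is the substantive part. I would argue that $dH^2+P_{N-1}$ is both dense and closed in $\cH(b)$. Density is immediate: it contains all polynomials --- a polynomial of degree $\ge N$ equals $dq+r$ with $\deg r<N$ --- and polynomials are dense in $\cH(b)$. For closedness it suffices to show the bounded linear map $\iota\colon H^2\oplus P_{N-1}\to\cH(b)$, $(h,p)\mapsto dh+p$, is bounded below; then $\operatorname{ran}\iota=dH^2+P_{N-1}$ is closed, and being dense it is all of $\cH(b)$, so $\cH(b)=dH^2+P_{N-1}$, the sum being direct since $dH^2\cap P_{N-1}=\{0\}$ for degree reasons. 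Using $(dh+p)^+=\overline\gamma\,P_+(\overline{\psi}\,\zeta^{N}h)+p^+$ and that $P_{N-1}$ is finite dimensional, a normalized sequence $(h_n,p_n)$ violating such a bound can be taken with $p_n\to0$ and $h_n$ satisfying $\|h_n\|_2\to1$, $h_n\rightharpoonup0$, $\|dh_n\|_2\to0$ and $\|P_+(\overline{\psi}\,\zeta^{N}h_n)\|_2\to0$. But
\[
\|dh_n\|_2^2+\|P_+(\overline{\psi}\,\zeta^{N}h_n)\|_2^2=\int_{\bT}\bigl(|d|^2+|\psi|^2\bigr)|h_n|^2\,dm\;-\;\|H_{\overline{\psi}\zeta^{N}}h_n\|_2^2,
\]
where $H_{\overline{\psi}\zeta^{N}}$ is the Hankel operator with symbol $\overline{\psi}\,\zeta^{N}$. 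Here $|d|^2+|\psi|^2$ is continuous and strictly positive on $\bT$ --- it could vanish only at the $\zeta_j$, but there $\psi\neq0$ --- hence $\ge\delta>0$; and $H_{\overline{\psi}\zeta^{N}}$ has finite rank by Kronecker's theorem ($\psi$ being rational), so it is compact and $\|H_{\overline{\psi}\zeta^{N}}h_n\|_2\to0$. This forces $\delta\|h_n\|_2^2\to0$, contradicting $\|h_n\|_2\to1$.

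I expect the lower bound in this last step --- equivalently, the closedness of $dH^2\oplus P_{N-1}$ in $\cH(b)$ --- to be the main obstacle. One might hope to prove $\cH(b)\subseteq dH^2+P_{N-1}$ by dividing a given $f\in\cH(b)$ by $d$ after subtracting an appropriate polynomial; this fails because $f/d$ and $\overline{(b/a)}f$ need not be in $L^2$ near the $\zeta_j$, so all manipulations must stay at the level of $L^2$ and of Toeplitz/Hankel operators. The structural fact that makes the estimate work is that the zeros of $d$ on $\bT$ occur precisely where $\psi$ is nonzero, so that $|d|^2+|\psi|^2$ stays bounded away from $0$ while the leftover defect, a Hankel operator with rational symbol, is finite rank and therefore inconsequential in the weak limit. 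Everything else --- the structure of $\psi$, the self-inversive identity, the inclusion $\supseteq$ --- is routine.
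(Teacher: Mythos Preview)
The paper does not prove this theorem; it is quoted from \cite{Costara13}*{Lemma 4.3} (with an alternative proof attributed to \cite{Blandigneres15}), so there is no in-paper argument to compare against.

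Your approach is correct and self-contained. The factorisation $bd=a\psi$ with $\psi$ rational, holomorphic across $\overline{\bD}$, and nonvanishing at each $\zeta_j$ is exactly the right structural input; from it the inclusion $dH^2\subseteq\cH(b)$ and the formula $(dh)^+=\bar\gamma\,P_+(\bar\psi\,\zeta^{N}h)$ follow cleanly via the self-inversive identity for $d$. For the reverse inclusion, density of polynomials plus a closed-range argument is a sound strategy, and your lower-bound step works: $|d|^2+|\psi|^2\ge\delta>0$ on $\bT$ because $d$ and $\psi$ have no common zeros there, while the Hankel defect $H_{\bar\psi\zeta^{N}}$ has finite rank (rational symbol, Kronecker) and hence sends weakly null sequences to $0$ in norm. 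Two small points are worth one extra line each in a final write-up: the reduction to $p_n\to0$, $h_n\rightharpoonup0$ should be justified by passing to a subsequence, identifying the weak limit of $dh_n+p_n$ as $dh+p=0$, and invoking $dH^2\cap P_{N-1}=\{0\}$; and this last intersection deserves more than ``degree reasons'' --- if $0\ne p=dh$ with $h\in H^2$ and $\deg p\le N-1$, then $h=p/d$ is rational with a genuine pole on $\bT$, hence $h\notin L^2(\bT)$.
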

With the symbol $\oplus$ we mean a direct sum, but not necessarily orthogonal (see \cites{Fricain16,Fricain18}). Note that since $a$ is an outer function, the first summand in \eqref{eq:decomposition} corresponds to $aH^2$.
Let $M(\bar a):= T_{\bar{a}}H^2$ be the range space of the Toeplitz operator $T_{\ol{a}}$ equipped with the range norm, then, using \cite{Fricain15a}*{Theorems 23.2 and 28.7}, it can be shown that $\cH(b) = M(\ol a)$ as sets if and only if $(a, b)$ forms a corona pair. By standard results in functional analysis, the corresponding norms are equivalent. Moreover, when $\cH(b)$ is given by $\eqref{eq:decomposition}$, we can introduce an equivalent norm on $\cH(b)$: if $f=\prod(z-\zeta_j)^{m_j}g+p$, then
\[
\|f\|^2_{\cH(b)}:= \|g\|^2_{H^2} + \|p\|^2_{H^2},
\]
which is equivalent to the earlier mentioned range norm. We will use this also as a norm for $M(\bar a)$.

Interpolating sequences, both multiplier and universal, for de Branges-Rovnyak spaces have previously been studied for certain specific choices of $b$. The objective of this paper is to extend those results. To be more precise, in \cite{Serra03} the author provides a complete characterization of multiplier interpolating sequences for 
$\cH(b)$ when this space can be viewed as a \textit{local Dirichlet space} $\cD_\mu$ (for a precise definition, we refer the reader to Section 2). Later, it was shown in \cite{Chacon11} that, under these conditions, multiplier and universal interpolating sequences coincide with sequences that satisfy a separation and a Carleson measure condition. It is worth noting that the space $\cD_\mu$ has the complete Nevanlinna-Pick property, while, more generally, determining when a de Branges-Rovnyak space has this property is a more delicate question (see \cite{Chu20}).\\

The first main result of this paper is the following theorem.

\begin{thm} \label{main_thm}
    Let $b$ be a non-extreme and rational function of the unit ball of $H^\infty$ and $a$ its Pythagorean mate. Let $\zeta_1, \ldots, \zeta_l$ be the zeroes of $a$ on $\bT$ with multiplicity $m_1, \ldots, m_l$ and $\Lambda$ be a sequence on $\bD$. The following are equivalent.
    \begin{enumerate}[label=(\roman*)]
        \item $\Lambda$ is $\cM (\cH (b))$-interpolating.
        \item $\Lambda$ is $\cH (b)$-interpolating.
        \item $\Lambda$ satisfies the Carleson condition and 
        \begin{equation} \label{main_sum}
        \sum_{n \in \bN} \frac{1-|\lambda_n|^2}{|\zeta_j-\lambda_n|^{2 m_j}} < \infty \quad \text{for every } 1\leq j \leq l.
        \end{equation}
    \end{enumerate}
\end{thm}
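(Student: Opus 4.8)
The structure is a cycle of implications $(i)\Rightarrow(ii)\Rightarrow(iii)\Rightarrow(i)$. The implication $(i)\Rightarrow(ii)$ is essentially free: it is the general fact, cited in the introduction via \cite{seip04}, that multiplier interpolating sequences are always universal interpolating. So the real content is $(ii)\Rightarrow(iii)$ and $(iii)\Rightarrow(i)$, and the main leverage throughout will be the decomposition $\cH(b)=\big(\prod_{j=1}^l(z-\zeta_j)^{m_j}\big)H^2\oplus P_{N-1}$ together with the identification $\cH(b)=M(\ol a)$ with equivalent norms.

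For $(ii)\Rightarrow(iii)$: I would first extract the Carleson condition. Writing $p(z)=\prod_j(z-\zeta_j)^{m_j}$, the decomposition shows $pH^2\subset\cH(b)$ contractively (up to the norm equivalence), and one computes that the $\cH(b)$-norm of the reproducing kernel $k^b_\lambda$ is comparable, for $\lambda$ away from the $\zeta_j$'s, to $(1-|\lambda|^2)^{-1/2}$, i.e. to the $H^2$-kernel norm, while near a $\zeta_j$ it is inflated by a factor $|\zeta_j-\lambda|^{-m_j}$ — this is where the sum in \eqref{main_sum} comes from. Restricting the interpolation operator $R_\Lambda$ to the subspace $pH^2$ and using that $f\mapsto pf$ maps $H^2$ into $\cH(b)$ boundedly, surjectivity of $R_\Lambda$ on $\cH(b)$ forces $H^2$-interpolation for the sequence, hence the Carleson condition by Shapiro–Shields. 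To get \eqref{main_sum}, I would test $R_\Lambda$ against the single kernel functions: boundedness of $R_\Lambda$ (which is part of "interpolating", since surjectivity plus closed graph gives boundedness) applied to $f=k^b_{\zeta}$ for $\zeta$ near $\zeta_j$, or more cleanly applied to suitable functions in $P_{N-1}$ and in $pH^2$ separately, yields that $\sum_n |g(\lambda_n)|^2/\|k^b_{\lambda_n}\|^2<\infty$ for every $g\in\cH(b)$; choosing $g\equiv 1\in P_{N-1}$ and comparing $\|k^b_{\lambda_n}\|^{-2}$ with $(1-|\lambda_n|^2)/|\zeta_j-\lambda_n|^{2m_j}$ localized near each $\zeta_j$ gives exactly \eqref{main_sum}.

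For $(iii)\Rightarrow(i)$: given the Carleson condition and \eqref{main_sum}, I would construct the interpolating multiplier explicitly. Since $\Lambda$ satisfies the Carleson condition it is $H^\infty$-interpolating, so there are Carleson interpolation functions; the Carleson constant also controls the usual linear interpolation operator into $H^\infty$. The point is that $H^\infty\subset\cM(\cH(b))$ — indeed $\cM(\cH(b))\supset H^\infty$ follows from $\cH(b)=M(\ol a)$ and the fact that $H^\infty$ multiplies $H^2$ and commutes appropriately with $T_{\ol a}$ (alternatively, from the decomposition: $H^\infty\cdot pH^2\subset pH^2$ and $H^\infty\cdot P_{N-1}\subset \cH(b)$ since polynomials times $H^\infty$ land in $H^2=pH^2\oplus$(finite-dim), and one checks boundedness). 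One has to be slightly careful because a given target data $(a_n)\in\ell^\infty$ must be interpolated by a single $\varphi\in\cM(\cH(b))$, but since $\cM(\cH(b))\supset H^\infty$ and $\Lambda$ is $H^\infty$-interpolating, any bounded data is already interpolated by an $H^\infty$ function — so the only thing \eqref{main_sum} is needed for is to guarantee, on the $\cH(b)$-interpolation side, that $R_\Lambda$ is well-defined and bounded (this is the direction where the weights at the $\zeta_j$ enter), which closes the loop back to $(i)$ via the abstract implication $(i)\Leftarrow$ "$H^\infty$-interpolating and $H^\infty\subset\cM$". I expect the cleanest write-up actually proves $(iii)\Rightarrow(i)$ directly: the Carleson condition $\Rightarrow$ $H^\infty$-interpolation $\Rightarrow$, since $H^\infty\subseteq\cM(\cH(b))$, $\cM(\cH(b))$-interpolation; then \eqref{main_sum} is what one separately verifies is forced, i.e. it belongs in $(ii)\Rightarrow(iii)$, and for $(iii)\Rightarrow(i)$ the summability condition is used only to ensure the data extracted from $\cH(b)$-functions is in $\ell^2$, not to build $\varphi$.

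The main obstacle is the bookkeeping around the boundary zeros $\zeta_j$: precisely estimating $\|k^b_\lambda\|_{\cH(b)}$ as $\lambda\to\zeta_j$ (equivalently $\|T_{\ol a}^{-1}k^{H^2}_\lambda\|$ up to the norm equivalence), and showing that this norm behaves like $(1-|\lambda|^2)^{1/2}|\zeta_j-\lambda|^{-m_j}$ near $\zeta_j$ and like $(1-|\lambda|^2)^{1/2}$ elsewhere, uniformly. Getting this two-sided estimate — likely via the orthogonal decomposition of \cite{Fricain18}, writing $k^b_\lambda = p\cdot(\text{something in }H^2) + (\text{polynomial part})$ and computing norms of each piece — is the technical heart, and everything else (Carleson theory, the inclusion $H^\infty\subset\cM(\cH(b))$, the abstract implications) is standard once that kernel asymptotic is in hand.
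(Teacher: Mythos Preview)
Your plan for $(iii)\Rightarrow(i)$ has a genuine gap: the inclusion $H^\infty\subset\cM(\cH(b))$ is false. By Theorem~\ref{multiplier_algebra} one has $\cM(\cH(b))=H^\infty\cap\cH(b)$, and this intersection is \emph{strictly} smaller than $H^\infty$ whenever $a$ has zeros on $\bT$. Neither of your proposed justifications works: Toeplitz operators $T_{\ol a}$ do not commute with multiplication by general $H^\infty$ functions, and the identity $H^2=pH^2\oplus P_{N-1}$ you invoke is wrong --- that direct sum equals $\cH(b)$, which is a proper subset of $H^2$. Concretely, a Blaschke product whose zeros accumulate at $\zeta_j$ fast enough to violate \eqref{main_sum} lies in $H^\infty$ but not in $\cH(b)$, hence is not a multiplier. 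So the chain ``Carleson $\Rightarrow$ $H^\infty$-interpolating $\Rightarrow$ $\cM(\cH(b))$-interpolating'' breaks at the last arrow, and \eqref{main_sum} cannot be relegated to a side condition used only for boundedness of $R_\Lambda$.

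This is exactly where the paper does real work. The bulk of Section~3 is devoted to showing that \eqref{main_sum} forces $B_\Lambda\in\cH(b)$ (via explicit computation of $D^N_\zeta(\varphi_n)$ and an inductive estimate for $D^N_\zeta(B_n)$, Lemmas~\ref{D_1^N(phi_n)}--\ref{B_Lambda in H(b)}), and then to constructing, from a given bounded target, an interpolating function of the form $(B_\Lambda-p_N)f_1$ plus correction terms that lands in $aH^2\cap H^\infty\subset\cM(\cH(b))$ (Lemma~\ref{sum&C_implies_mult_interp}). The summability condition is essential for building the multiplier, not just for bookkeeping. Your sketch of $(ii)\Rightarrow(iii)$ via the kernel-norm asymptotic is also shakier than you suggest: the formula $\|k^b_\lambda\|^2=(1-|b(\lambda)|^2)/(1-|\lambda|^2)$ does not in general behave like $|\zeta_j-\lambda|^{2m_j}/(1-|\lambda|^2)$ uniformly near $\zeta_j$ (try $b(z)=(1+z)/2$ along radii), so testing $R_\Lambda$ on the constant $1$ does not directly yield \eqref{main_sum}. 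The paper instead argues by contradiction, splitting a subsequence accumulating at $\zeta_r$ into two pieces on which it interpolates $0$ and $1$ respectively, and uses the decomposition \eqref{eq:decomposition} together with $H^2$-interpolation to force a contradiction (Lemma~\ref{interp_implies_sum}).
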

We mention that \eqref{main_sum} already appears in previous work on existence of non-tangential boundary values in model spaces $K_B=\cH(B)$ when $B$ is a Blaschke product (see \cite{Ahern70}; see also \cite{Fricain08} for an extension to $\cH(b)$ on the upper half plane). 

We now introduce the random setting we are interested in. If not differently specified, by the term \textit{random sequence} we mean a sequence of the following kind. Let $(r_n)_{n \in \bN}$ be a sequence of radii such that $0 \leq r_n < 1$ for every $n$ and consider a family $(\vartheta_n)_{n \in \bN}$ of independent random variables uniformly distributed in $[0,2\pi]$. Define
\[
\Lambda (\omega) = (\lambda_n(\omega))_{n \in \bN} = (r_ne^{i \vartheta_n (\omega)})_{n \in \bN}.
\]
Such random sequences, sometimes mentioned in the literature as Steinhaus sequences, have already been studied in the last years. We mention the pioneering works of Cochran \cite{cochran90} and Rudowicz \cite{rudowicz94} leading to a $0-1$ law for interpolating sequences for the Hardy space and the work of Bogdan \cites{bogdan96} where this model was used to better understand random zero sets in Dirichlet spaces. It is worth mentioning more recent works, such as \cite{chalmoukis22} which deals with interpolation in Dirichlet spaces, \cites{dayan2023,chalmoukis2024}, on interpolation and related properties for spaces of holomorphic functions in several variables, and \cite{Lamberti24} which deals with random interpolation in the Nevanlinna class.

We prove the following theorem.

\begin{thm} \label{main_random}
    Let $\Lambda$ be a random sequence and $b$ a non-extreme, rational function. Consider the decomposition of $\cH(b)$ as described in \eqref{eq:decomposition} and define
    \[
    M=\max (\{ m_j : 1\leq j \leq l \}).
    \]
    Then
    \[
    \bP ( \Lambda \text{ is interpolating for } \cH(b) ) = 
    \begin{dcases}
        1 &
\text{if } \sum_{n \in \bN} (1-|\lambda_n|^2)^{\frac{1}{2M}} < \infty \\
   0 &\text{if }     \text{otherwise}
 \end{dcases}
    \]
    where with the term interpolating we mean both multiplier and universal interpolating.
\end{thm}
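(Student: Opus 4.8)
The plan is to reduce everything to Theorem~\ref{main_thm}, which says that $\Lambda$ is interpolating for $\cH(b)$ (in either sense simultaneously) if and only if $\Lambda$ satisfies the Carleson condition~\eqref{Carleson_condition} and the sums $S_j(\Lambda):=\sum_n \frac{1-|\lambda_n|^2}{|\zeta_j-\lambda_n|^{2m_j}}$ from \eqref{main_sum} are finite for every $1\le j\le l$. Hence
\[
\{\Lambda(\omega)\text{ is interpolating}\}=\{\Lambda(\omega)\text{ satisfies }\eqref{Carleson_condition}\}\cap\bigcap_{j=1}^{l}\{S_j(\Lambda(\omega))<\infty\},
\]
and since none of these events depends on finitely many of the $\vartheta_n$ (removing or moving finitely many points affects neither the convergence of a series nor, almost surely, whether $\inf_n\prod_{k\neq n}\rho(\lambda_n,\lambda_k)>0$), Kolmogorov's zero–one law gives probability $0$ or $1$ and it remains to decide which, in terms of the fixed radii.

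The workhorse is the analysis of one random series $S_j(\Lambda(\omega))$. Writing $\epsilon_n=1-|\lambda_n|$ and using that $\vartheta_n-\arg\zeta_j$ is again uniform, one has $|\zeta_j-\lambda_n|^2\asymp \epsilon_n^2+\mathrm{dist}(\vartheta_n,\arg\zeta_j)^2$ uniformly on $\bD$, so $S_j$ is a sum of independent nonnegative random variables. For such series the three-series theorem (applied to the truncations $X_n\wedge 1$, whose variances are dominated by their means) gives $\sum_n X_n<\infty$ a.s. if and only if $\sum_n\bE[X_n\wedge 1]<\infty$. A direct computation of the truncated expectation, splitting the angular integral at the place $u\asymp\epsilon_n^{1/(2m_j)}$ where the integrand $\frac{\epsilon_n}{(\epsilon_n^2+u^2)^{m_j}}$ crosses the value $1$, yields $\bE[X_n\wedge 1]\asymp \epsilon_n^{1/(2m_j)}$. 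Summing over $n$ and grouping by dyadic level $A_k$, on which $\epsilon_n\asymp 2^{-k}$, gives
\[
S_j(\Lambda(\omega))<\infty\ \text{a.s.}\iff \sum_n(1-|\lambda_n|)^{\frac{1}{2m_j}}<\infty\iff \sum_{k\in\bN}N_k\,2^{-\frac{k}{2m_j}}<\infty .
\]
As $m_j\le M$ for all $j$, the condition $\sum_k N_k 2^{-k/(2M)}<\infty$ is the strongest of these: it implies $\sum_k N_k 2^{-k/(2m_j)}<\infty$ for every $j$, while if $\sum_k N_k 2^{-k/(2M)}=\infty$ then $S_j$ diverges a.s. for any $j$ with $m_j=M$.

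This already disposes of the ``$=0$'' case: if $\sum_k N_k2^{-k/(2M)}=\infty$ then almost surely some $S_j=\infty$, so almost surely $\Lambda(\omega)$ fails condition (iii) of Theorem~\ref{main_thm} and is not interpolating. For the ``$=1$'' case I also need that the hypothesis $\sum_k N_k2^{-k/(2M)}<\infty$ forces $\Lambda(\omega)$ to satisfy the Carleson condition almost surely. The point is that this hypothesis is far stronger than the Blaschke condition: since $M\ge 1$, the terms $N_k 2^{-k/(2M)}$ are eventually $\le 1$, whence $\sum_k N_k^2 2^{-k}\le C+\sum_k N_k 2^{-k/(2M)}<\infty$ (and likewise $\sum_k k\,2^{-k}N_k<\infty$). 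Now $\sum_k N_k^2 2^{-k}<\infty$ is precisely the threshold for $\Lambda(\omega)$ to be pseudohyperbolically separated almost surely — the chance that two of the $N_k$ uniform points in $A_k$ lie within pseudohyperbolic distance $\delta$ is $\lesssim \delta N_k^2 2^{-k}$, and since the levels are independent, Borel–Cantelli applies in both directions — and, together with a Bernstein-type concentration estimate for the atomic measure $\sum_n(1-|\lambda_n|^2)\delta_{\lambda_n(\omega)}$ over dyadic Carleson boxes, it yields that this measure is almost surely Carleson; this is the random-Carleson theorem of Cochran and Rudowicz \cites{cochran90,rudowicz94}, which I would invoke. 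Combining, under $\sum_k N_k2^{-k/(2M)}<\infty$ the finitely many events $\{S_j(\Lambda(\omega))<\infty\}$ and $\{\Lambda(\omega)$ satisfies \eqref{Carleson_condition}$\}$ all have probability $1$, hence so does their intersection, and Theorem~\ref{main_thm} finishes the proof.

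I expect the main obstacle to be the almost-sure Carleson step. The separation half is a clean Borel–Cantelli argument, but verifying the Carleson \emph{measure} condition uniformly over all dyadic boxes is delicate, because a naive union bound over the $\asymp 2^k$ angular positions at level $k$ is too lossy to be summed against the Bernstein tail; one must exploit that the relevant boxes can be anchored at the points of $\Lambda(\omega)$, or else quote the sharp form of the Cochran–Rudowicz result. The secondary point requiring care is the truncated-expectation computation, in particular making the estimate $|\zeta_j-\lambda_n|^2\asymp\epsilon_n^2+\mathrm{dist}(\vartheta_n,\arg\zeta_j)^2$ and the resulting bound $\bE[X_n\wedge1]\asymp\epsilon_n^{1/(2m_j)}$ uniform in $n$, including the regime where $\vartheta_n$ is bounded away from $\arg\zeta_j$.
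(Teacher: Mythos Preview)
Your proposal is correct and matches the paper's argument: the paper isolates the 0--1 law for each sum $S_j$ as a separate lemma (Lemma~\ref{random lemma}), proved via the three-series theorem with the same asymptotics $\bP(X_n>1)\simeq\bE[Y_n]\simeq(1-|\lambda_n|^2)^{1/(2M)}$ and $\bV[Y_n]\le\bE[Y_n]$ that you indicate, and then combines this with Theorem~\ref{main_thm} and the Cochran--Rudowicz criterion $\sum_k N_k^2\,2^{-k}<\infty$ for the almost-sure Carleson condition, deduced from $\sum_k N_k 2^{-k/(2M)}<\infty$ by the same $\ell^1\Rightarrow\ell^2$ step you give. Your packaging of the three series into the single nonnegative-series criterion $\sum_n\bE[X_n\wedge 1]<\infty$ is marginally cleaner than the paper's separate verification of (i)--(iii), but the substance is identical and your anticipated ``main obstacle'' is indeed handled by citation rather than reproved.
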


Since the event of being interpolating is a so-called tail event, in view of Kolmogorov's 0-1 law the above statements translate into if and only if statements.
The reader should also notice that the above condition only involves the highest multiplicity $m_j$ but not the number of zeros $\zeta_k$.

\begin{rem}
In the existing literature, the analysis of such random sequences typically involves introducing a dyadic partition of the unit disk along with an associated counting function \cites{chalmoukis22, cochran90, chalmoukis2024}. Specifically, for each integer $k \geq 0$, define
\begin{align*}
A_k & = \left\{ z \in \bD : 1-2^k \leq |z| < 1-2^{-(k+1)} \right\}, \\ 
N_k & = \# (A_k \cap \Lambda).
\end{align*}
Note that $N_k$ is actually a number and not a random variable, since it depends only on the sequence of the {\it a priori} fixed radii. Then the condition $\sum_{n} (1-|\lambda_n|)^{\frac{1}{2M}} < \infty$ is equivalent to $\sum_k N_k 2^{-\frac{k}{2M}}<\infty$.
\end{rem}

\subsection{Notation} If $f$ and $g$ are positive expressions, we will write $f \lesssim g$ if there exists $C>0$ such that $f\leq C g$, where $C$ does not depend on the parameters behind $f$ and $g$. We will simply write $f \simeq g$ if $f \lesssim g$ and $g \lesssim f$. Finally when $f$ and $g$ are expressions for which we can consider the limit of their quotient in a point, with $f \sim g$ we mean that $\lim f/g =1$ in that point.

\section*{Acknowledgments}
We gratefully acknowledge the referees for their careful reading and their very insightful comments which greatly helped to correct and improve the initial version of the text.

\section{Preliminaries}

\subsection{de Branges-Rovnyak spaces and local Dirichlet spaces}
We start this section describing the existing connection between de Branges-Rovnyak and local Dirichlet spaces, that was already mentioned in the introduction. Let $\mu$ be a finite positive Borel measure on $\bT$ and let $P[\mu]$ its Poisson extension. The associated \textit{generalized Dirichlet space} $\cD_\mu$ consists of those $f \in H^2$ such that
\[
D_\mu (f) := \int_{\bD} |f'(z)|^2 P[\mu](z) dA(z) < \infty.
\]
In particular whenever $\mu=\delta_\zeta$ for some $\zeta \in \bT$, the space is usually called \textit{local Dirichlet space}. In this situation we have that (see \cite{Richter91b})
\[
D_{\delta_\zeta} (f) = \int_{\bT} \Big| \frac{f(\omega)-f(\zeta)}{\omega-\zeta} \Big|^2 dm(\omega).
\]
We will write $\cD_\zeta$ instead of $\cD_{\delta_\zeta}$ and $D_\zeta$ instead of $D_{\delta_\zeta}$ in the rest of the paper to simplify the notation.

The spaces $\cD_\mu$ were first introduced by Richter \cite{Richter91a}. Later on it was shown in \cite{Richter91b} that $\cD_{\zeta}$ can be viewed as a de Branges-Rovnyak space for some $b$ with equivalence of norms. This result was strengthened by Sarason \cite{Sarason97}, who showed how to choose $b$ in order to have $\cD_{\zeta} = \cH(b)$ with equality of norms. The converse of this result was then proved in \cite{Chevrot10}. Finally in \cite{Costara13}, the authors considered the problem of when $\cD_\mu = \cH(b)$ without necessarily equality of norms. We report here one of the main result of their paper.

\begin{thm}[\cite{Costara13}*{Theorem 4.1}]
    Let $(a,b)$ be a rational pair and let $\mu$ be a finite positive measure on $\bT$. Then $\cD_\mu=\cH(b)$ if and only if:
    \begin{enumerate}[label=(\roman*)]
        \item the zeroes of $a$ on $\bT$ are all simple;
        \item the support of $\mu$ is exactly equal to this set of zeros.
    \end{enumerate}
\end{thm}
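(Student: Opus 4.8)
The plan is to prove both implications from explicit descriptions of $\cH(b)$ and $\cD_\mu$ \emph{as sets}, the equivalence of the corresponding norms then being automatic from the closed graph theorem, since both spaces are reproducing kernel Hilbert spaces continuously embedded in $\Hol(\bD)$. For $\cH(b)$ this is Theorem~\ref{decomposition}: writing $Q(z)=\prod_{j=1}^{l}(z-\zeta_j)^{m_j}$ and $N=\sum_j m_j$, $\cH(b)=QH^2\oplus P_{N-1}$, the decomposition being topological (see the $M(\ol a)$ picture of \cite{Fricain18}), so that $\|Qg\|_{\cH(b)}\asymp\|g\|_{H^2}$. For $\cD_\mu$ I would use that $D_{\mu_1+\mu_2}=D_{\mu_1}+D_{\mu_2}$ together with the classical identity $\cD_\zeta=\bC\oplus(z-\zeta)H^2$, $D_\zeta\big(c+(z-\zeta)g\big)=\|g\|_{H^2}^2$: for a finite atomic measure $\mu=\sum_{j\in S}c_j\delta_{\zeta_j}$ with distinct $\zeta_j\in\bT$ and $c_j>0$, this yields $\cD_\mu=\bigcap_{j\in S}\cD_{\zeta_j}$, and a short polynomial-division / Lagrange-interpolation argument identifies this intersection, as a set, with $\big(\prod_{j\in S}(z-\zeta_j)\big)H^2$ plus the polynomials of degree at most $\#S-1$.

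With these descriptions the implication ``$\Leftarrow$'' is immediate: if all $m_j=1$ and $\supp\mu=\{\zeta_1,\dots,\zeta_l\}$, then $N=l$, $Q=\prod_j(z-\zeta_j)$, $\mu=\sum_{j=1}^{l}c_j\delta_{\zeta_j}$ with all $c_j>0$, and both spaces equal the set $\big(\prod_j(z-\zeta_j)\big)H^2\oplus P_{l-1}$. For ``$\Rightarrow$'', assume $\cD_\mu=\cH(b)$; the first step, and the one I expect to be the crux, is to show $\supp\mu\subseteq\{\zeta_1,\dots,\zeta_l\}$. From $QH^2\subseteq\cH(b)=\cD_\mu$ and $\|Qg\|_{\cH(b)}\asymp\|g\|_{H^2}$ we obtain $D_\mu(Qg)\lesssim\|g\|_{H^2}^2$ for all $g\in H^2$. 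Fix $\eta_0\in\bT\setminus\{\zeta_1,\dots,\zeta_l\}$ and test with $g=g_w$, the normalized reproducing kernel of $H^2$ at $w=r\eta_0$ ($r\to1$). Near $\eta_0$ the factor $Q$ is bounded above and below, hence $|(Qg_w)'|\asymp|Q(\eta_0)|\,(1-|w|^2)^{-3/2}$ on $D\big(w,\tfrac14(1-|w|)\big)$, and Harnack's inequality for the positive harmonic function $P[\mu]$ gives $D_\mu(Qg_w)\gtrsim|Q(\eta_0)|^2\,P[\mu](w)\,(1-|w|^2)^{-1}$. Since also $D_\mu(Qg_w)\lesssim\|g_w\|_{H^2}^2=1$, we conclude $P[\mu](r\eta_0)\lesssim 1-r$, uniformly for $\eta_0$ in any arc $J$ whose closure avoids $\{\zeta_1,\dots,\zeta_l\}$; estimating the Poisson integral of $\mu$ from below, this forces $\mu(\mathrm{arc}_\delta(\eta_0))\lesssim\delta^2$ uniformly for $\eta_0\in J$, and covering $J$ by arcs of length $\delta$ yields $\mu(J)\lesssim|J|\,\delta\to 0$, i.e.\ $\mu(J)=0$. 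As $\bT\setminus\{\zeta_1,\dots,\zeta_l\}$ is a countable union of such arcs, $\supp\mu\subseteq\{\zeta_1,\dots,\zeta_l\}$; thus $\mu=\sum_{j\in S}c_j\delta_{\zeta_j}$ with $S\subseteq\{1,\dots,l\}$, $c_j>0$, and $\cD_\mu=\big(\prod_{j\in S}(z-\zeta_j)\big)H^2$ plus the polynomials of degree at most $\#S-1$.

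The second step then extracts conditions (i) and (ii) simultaneously, by testing against one family of functions. For $j_1\in\{1,\dots,l\}$ put $f_{j_1}(z):=\big(\prod_{j\in S}(z-\zeta_j)\big)(1-\ol{\zeta_{j_1}}z)^{-1/4}$. Since $(1-\ol{\zeta_{j_1}}z)^{-1/4}\in H^2$ and $f_{j_1}$ vanishes at every $\zeta_j$ with $j\in S$, one checks $f_{j_1}\in\bigcap_{j\in S}\cD_{\zeta_j}=\cD_\mu$. Along a Stolz angle at $\zeta_{j_1}$ one has $|f_{j_1}(z)|\asymp|z-\zeta_{j_1}|^{3/4}$ if $j_1\in S$ and $|f_{j_1}(z)|\asymp|z-\zeta_{j_1}|^{-1/4}$ if $j_1\notin S$. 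If $f_{j_1}\in\cH(b)$, write $f_{j_1}=Qh+r$ with $h\in H^2$ and $r\in P_{N-1}$; then along the same Stolz angle $|h(z)|=|f_{j_1}(z)-r(z)|/|Q(z)|\asymp|z-\zeta_{j_1}|^{-\beta}$, with $\beta=m_{j_1}$ if $r(\zeta_{j_1})\neq0$, $\beta=m_{j_1}-\tfrac34$ if $j_1\in S$ and $r(\zeta_{j_1})=0$, and $\beta=m_{j_1}+\tfrac14$ if $j_1\notin S$. A holomorphic function on $\bD$ of size $\asymp|z-\zeta_{j_1}|^{-\beta}$ along a Stolz angle cannot lie in $H^2$ when $\beta>\tfrac12$ (integrate over the circles $|z|=\rho$, $\rho\to1$). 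Hence $f_{j_1}\in\cH(b)$ is impossible when $j_1\notin S$ (there $\beta\geq\tfrac54$), and when $j_1\in S$ it forces $m_{j_1}=1$ (if $m_{j_1}\geq2$ then every admissible $r$ leaves $\beta\geq\tfrac54$). Letting $j_1$ range over $\{1,\dots,l\}$ we conclude $S=\{1,\dots,l\}$, i.e.\ $\supp\mu=\{\zeta_1,\dots,\zeta_l\}$ — condition (ii) — and $m_j=1$ for all $j$ — condition (i).

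I expect the main obstacle to be the first step of ``$\Rightarrow$'': upgrading the soft inclusion $QH^2\subseteq\cD_\mu$ to the quantitative decay of $\mu$ off the zero set of $a$. This uses the closed-graph passage to equivalent norms, a localization isolating the dominant $Qg'$ part of $(Qg)'$ from the harmless $Q'g$ part (harmless because $\int_{\bD}|g|^2P[\mu]\,dA\lesssim\|g\|_{H^2}^2$), Harnack's inequality, and the conversion of $P[\mu](r\eta_0)\lesssim 1-r$ into control of the mass of short arcs (equivalently, into the Carleson-type bound $\mu(I)\lesssim|I|^2$, which forces $\mu=0$). By comparison, the second step and the implication ``$\Leftarrow$'' are routine once the two spaces are described explicitly.
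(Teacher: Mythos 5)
The paper does not prove this statement: it is imported verbatim from Costara--Ransford \cite{Costara13}*{Theorem 4.1}, so there is no internal proof to compare against; I can only judge your argument on its own terms, and it holds up. Your two structural inputs are sound: $\cH(b)=QH^2\oplus P_{N-1}$ with $Q=\prod_j(z-\zeta_j)^{m_j}$ (Theorem \ref{decomposition}, topological by \cite{Fricain18} or simply by closed graph/open mapping applied to $(g,p)\mapsto Qg+p$), and, for $\mu=\sum_{j\in S}c_j\delta_{\zeta_j}$, the identification $\cD_\mu=\bigl(\prod_{j\in S}(z-\zeta_j)\bigr)H^2+P_{\#S-1}$ coming from the classical local Douglas formula $\cD_\zeta=\bC+(z-\zeta)H^2$, $D_\zeta(c+(z-\zeta)g)=\|g\|_{H^2}^2$ (the $N=1$ case of the higher-order result quoted in the paper). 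The necessity argument is the genuinely nontrivial part and both of its steps are correct: testing $D_\mu$ against $Q\,g_w$ with normalized Szeg\H{o} kernels, isolating $Qg_w'$ from $Q'g_w$, and using Harnack gives $P[\mu](r\eta_0)\lesssim 1-r$ away from the zeros of $a$, hence $\mu(I)\lesssim|I|^2$ and $\mu=0$ there; and the fractional-power test functions $\bigl(\prod_{j\in S}(z-\zeta_j)\bigr)(1-\ol{\zeta_{j_1}}z)^{-1/4}$, combined with the pointwise bound $|h(z)|\le\|h\|_{H^2}(1-|z|^2)^{-1/2}$ (which is the cleanest way to exclude radial growth of order $\beta>1/2$, and is what your circle-integration sketch amounts to), force $S=\{1,\dots,l\}$ and $m_j=1$. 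Two small points deserve full detail in a written version: the ``Lagrange interpolation'' step needs the lemma $\bigcap_{j\in S}(z-\zeta_j)H^2=\prod_{j\in S}(z-\zeta_j)H^2$, which is not purely algebraic and is best justified by Smirnov's maximum principle (the divisor is a rational outer function with boundary quotient in $L^2$); and the uniformity of the estimate $P[\mu](r\eta_0)\lesssim 1-r$ over $\eta_0\in J$ requires taking $1-r$ small depending on $\operatorname{dist}(J,\{\zeta_1,\dots,\zeta_l\})$ so that $|Qg_w'|$ really dominates $|Q'g_w|$ on $D(w,\tfrac14(1-|w|))$ -- both are routine, neither is a gap. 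Note also that your scheme only uses the upper bound $\|Qg\|_{\cH(b)}\lesssim\|g\|_{H^2}$, which already follows from a closed-graph argument, so the appeal to orthogonality in $M(\ol a)$ can be avoided entirely.
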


In \cite{Richter21} the authors introduce the \textit{local Dirichlet spaces of higher order} $\cD_\zeta^N$, with $N \in \bN$. For $f \in H^2$ define the (possibly infinite) expression
\[
D_\zeta^N(f):= \inf \Big\{ \big\| \frac{f-p}{(z-\zeta)^N} \big\|_{H^2} : p \text{ is a polynomial of degree strictly less than } N \Big\},
\]
and $\cD_\zeta^N$ is the space of functions $f \in H^2$ such that $\|f\|_{\cD_\zeta^N}:=\|f\|_{H^2} + D_\zeta^N(f) < \infty$. In particular, if $f$ extends analytically to a neighborhood of $\zeta$, we have (see \cite{Richter21})
\[
D_\zeta^N (f) := \int_\bT \Big| \frac{f(\omega)-T_{N-1}(f,\zeta)(\omega)}{(\omega-\zeta)^N} \Big|^2 dm(\omega) < \infty,
\]
where $T_{N-1}(f,\zeta)$ is the $(N-1)$-th order Taylor polynomial of $f$ at $\zeta$,
\[
T_{N-1}(f,\zeta)(\omega) = \sum_{j=0}^{N-1} \frac{f^{(j)}(\zeta)}{j!} (\omega-\zeta)^j.
\]
In the same paper the authors also proved that these spaces can be seen as de Branges-Rovnyak spaces. The theorem is the following.

\begin{thm}[\cite{Richter21}*{Theorem 1.1}]
    Let $b$ be a non-extreme point of the unit ball of $H^\infty$ with $b(0)=0$, and let $N \in \bN$. Then the following are equivalent:
    \begin{enumerate}[label=(\roman*)]
        \item $b$ is a rational function of degree $N$ such that the mate $a$ has a single zero of multiplicity $N$ at a point $\zeta \in \bT$;
        \item there is a $\zeta \in \bT$ and a polynomial $\tilde{p}$ of degree strictly less than $N$ with $\tilde{p}(\zeta) \neq 0$ such that $\|f\|^2_{\cH(b)} = \|f\|_{H^2}^2 + D_\zeta^N(\tilde{p}f)$ for $f\in \cH(b)$.
    \end{enumerate}
    If these conditions hold, then there are polynomials $p$ and $q$ of degree less or equal to $N$ such that $b=\frac{p}{q}$, $a=\frac{(z-\zeta)^N}{q}$, $\tilde{p}(z)=z^N \ol{p(1/\ol{z})}$, for $z \in \bD$, and $|q(\omega)|^2=|p(\omega)|^2+|\omega-\zeta|^{2N}$ for all $\omega \in \bT$. Furthermore, $\cH(b) = \cD_\zeta^N$ with equivalence of norms.
\end{thm}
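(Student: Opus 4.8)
The plan is to route everything through the standard description of a non-extreme $\cH(b)$: if $b$ is non-extreme with Pythagorean mate $a$, then $f\in\cH(b)$ if and only if $f\in H^2$ and there is a (necessarily unique) $g\in H^2$ with $T_{\ol b}f=T_{\ol a}g$, and in that case $\|f\|_{\cH(b)}^2=\|f\|_{H^2}^2+\|g\|_{H^2}^2$ (see \cite{Fricain15a}). The first step is a purely algebraic reduction of the equation $T_{\ol b}f=T_{\ol a}g$ in the case $b=p/q$ and $a=(z-\zeta)^N/q$ with $q$ a polynomial having no zero in $\ol\bD$: since then $1/q\in H^\infty$ is outer and $T_{\ol q}$ is invertible with inverse $T_{\ol{1/q}}$, the equation is equivalent to $T_{\ol p}f=T_{\ol{(z-\zeta)^N}}g$. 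Passing to boundary values and writing $p^*(z)=z^N\ol{p(1/\ol z)}$ for the reversed polynomial, on $\bT$ one has $\ol{p(\omega)}=\omega^{-N}p^*(\omega)$ and $\ol{(\omega-\zeta)^N}=c\,\omega^{-N}(\omega-\zeta)^N$ with $c=(-\ol\zeta)^N$, so $|c|=1$. Bookkeeping the Fourier coefficients — the difference $\ol p f-\ol{(z-\zeta)^N}g$ must be supported on strictly negative frequencies, while $p^*f$ and $(z-\zeta)^N g$ lie in $H^2$ — shows that the equation holds precisely when $p^*f-c\,(z-\zeta)^N g$ is a polynomial $r$ with $\deg r<N$, i.e. $g=c^{-1}\frac{p^*f-r}{(z-\zeta)^N}$, and hence $\|g\|_{H^2}^2=\|\frac{p^*f-r}{(z-\zeta)^N}\|_{H^2}^2$, which I will want to recognise as a higher-order local Dirichlet integral.

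For the implication (i)$\Rightarrow$(ii) (and the ``moreover'' part): starting from $b$ rational non-extreme of degree $N$ whose mate $a$ vanishes on $\ol\bD$ only at $\zeta$, to order $N$, write $b=p/q$ in lowest terms; boundedness of $b$ forces $q$ to be zero-free on $\ol\bD$, and from $|a|^2=(|q|^2-|p|^2)/|q|^2$ on $\bT$ together with the fact that $|q|^2-|p|^2$ is a trigonometric polynomial vanishing on $\bT$ to exact order $2N$ at $\zeta$ and positive elsewhere, a degree count (using that $b(0)=0$ gives $p(0)=0$, hence $\deg p^*<N$) yields $\deg q=N$, and $|q|^2=|p|^2+|z-\zeta|^{2N}$ on $\bT$ after rescaling $p,q$ by a common constant, and then $a=(z-\zeta)^N/q$ after absorbing a unimodular constant into $q$; also $p^*(\zeta)=\zeta^N\ol{p(\zeta)}\neq 0$ because $|b(\zeta)|=1$. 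Feeding $\tilde p:=p^*$ into the reduction above, for $f\in H^2$ we get that $f\in\cH(b)$ iff there is $r\in P_{N-1}$ with $(\tilde p f-r)/(z-\zeta)^N\in H^2$, which by the known set description $\cD_\zeta^N=(z-\zeta)^N H^2\oplus P_{N-1}$ (the correction polynomial being unique and equal to $T_{N-1}(\cdot,\zeta)$) is the same as $\tilde p f\in\cD_\zeta^N$, i.e. $D_\zeta^N(\tilde p f)<\infty$; and then $\|g\|_{H^2}^2=\|\frac{\tilde p f-T_{N-1}(\tilde p f,\zeta)}{(z-\zeta)^N}\|_{H^2}^2=D_\zeta^N(\tilde p f)$, which is precisely $\|f\|_{\cH(b)}^2=\|f\|_{H^2}^2+D_\zeta^N(\tilde p f)$. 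The equality $\cH(b)=\cD_\zeta^N$ with equivalent norms then follows from Theorem~\ref{decomposition} (both spaces equal $(z-\zeta)^N H^2\oplus P_{N-1}$ as sets) and the closed graph theorem.

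For (ii)$\Rightarrow$(i), I would first build a rational candidate from the given data $\zeta$ and $\tilde p\in P_{N-1}$ with $\tilde p(\zeta)\neq 0$. Put $p:=\tilde p^*$, so $p(0)=0$, $\deg p\le N$, $p^*=\tilde p$, and $p(\zeta)=\zeta^N\ol{\tilde p(\zeta)}\neq 0$; then the trigonometric polynomial $|p(\omega)|^2+|\omega-\zeta|^{2N}$ is strictly positive on $\bT$ (its two summands have no common zero), so by the Fej\'er--Riesz theorem there is a polynomial $q$ with no zero in $\ol\bD$ and $|q|^2=|p|^2+|z-\zeta|^{2N}$ on $\bT$; normalise $q$ (and relabel $p$ by a unimodular constant) so that $a_0:=(z-\zeta)^N/q$ has $a_0(0)>0$ and $p^*=\tilde p$ still holds, and set $b_0:=p/q$. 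Then $|a_0|^2+|b_0|^2=1$ on $\bT$, $\|b_0\|_{H^\infty}=1$, $b_0$ is non-extreme with mate $a_0$, $b_0(0)=0$, and $a_0$ vanishes on $\ol\bD$ only at $\zeta$, to order $N$; moreover the polynomial identity $q q^*=p p^*+(z-\zeta)^N(1-\ol\zeta z)^N$ (the polynomial form of the Pythagorean relation) shows that $p$ and $q$ share no zero, so $\deg b_0=N$, i.e. $b_0$ satisfies (i). By the case just treated, $\|f\|_{\cH(b_0)}^2=\|f\|_{H^2}^2+D_\zeta^N(\tilde p f)=\|f\|_{\cH(b)}^2$ for every $f$; hence $\cH(b)$ and $\cH(b_0)$ have the same reproducing kernel, so $\ol{b(w)}b(z)=\ol{b_0(w)}b_0(z)$ on $\bD\times\bD$, which forces $b=\lambda b_0$ for some unimodular $\lambda$. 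Since unimodular multiples share the same normalised mate and the same degree, $b$ is rational of degree $N$ whose mate has a single zero of multiplicity $N$ at $\zeta$, and the displayed formulas for $p,q,a,\tilde p$ are those of the construction after this unimodular relabelling.

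The routine ingredients are the Fej\'er--Riesz/degree-count arguments, the observation that a nonzero polynomial of degree $<N$ divided by $(z-\zeta)^N$ is never in $H^2$ (its boundary integral diverges at $\zeta$) — which gives both the uniqueness of $r$ and $(z-\zeta)^N H^2\cap P_{N-1}=\{0\}$ — and the rigidity step that reproducing kernels determine $b$ up to a unimodular constant. I expect the main obstacle to be the content of the first paragraph: doing the Fourier-coefficient bookkeeping carefully enough to land on ``$p^*f-c(z-\zeta)^N g$ is a polynomial of degree exactly $<N$'' with the right constant $c$, and then dovetailing this with the theory of $\cD_\zeta^N$ so that the abstract $g$ furnished by the $\cH(b)$-description is literally the function whose norm is $D_\zeta^N(\tilde p f)$ — i.e. identifying the correction polynomial $r$ with the Taylor polynomial $T_{N-1}(\tilde p f,\zeta)$ and knowing that $D_\zeta^N(\tilde p f)<\infty$ is equivalent to membership of $\tilde p f$ in $(z-\zeta)^N H^2\oplus P_{N-1}$.
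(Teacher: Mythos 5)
This statement is quoted verbatim from \cite{Richter21}*{Theorem 1.1}; the paper you are reading gives no proof of it, so there is no internal argument to compare against. Judged on its own merits, your proposal is essentially correct and follows the same route as the original source: the non-extreme characterization $f\in\cH(b)\iff T_{\ol{b}}f=T_{\ol{a}}g$ with $\|f\|^2_{\cH(b)}=\|f\|^2_{H^2}+\|g\|^2_{H^2}$, the reduction via invertibility of $T_{\ol{q}}$ to $T_{\ol{p}}f=T_{\ol{(z-\zeta)^N}}g$, the Fourier bookkeeping showing $p^*f-c(z-\zeta)^Ng\in P_{N-1}$, the Fej\'er--Riesz and degree-count arguments (where your use of $p(0)=0$ to pin down the frequency-$N$ coefficient and get $\deg q=N$ is the right observation), and the reproducing-kernel rigidity giving $b=\lambda b_0$ in the converse direction. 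The one place where you lean on an outside ingredient is the structural lemma for $\cD^N_\zeta$, namely that $D^N_\zeta(h)<\infty$ if and only if $h\in(z-\zeta)^NH^2\oplus P_{N-1}$, that the correction polynomial is then the boundary Taylor polynomial $T_{N-1}(h,\zeta)$, and that $D^N_\zeta(h)$ equals the squared $H^2$-norm of the quotient; this is indeed known (from the higher-order local Dirichlet literature, together with a Smirnov-class maximum principle to pass from square-integrable boundary values to $H^2$ membership), and you correctly identify it as the point carrying most of the analytic weight, but a self-contained write-up would have to prove it rather than cite it.
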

We refer the reader to \cite{Pouliasis25} for some recent related results on the equality $\cD_{\mu}=\cH(b)$.

To complete the preliminaries needed on de Branges-Rovnyak spaces, we recall the following description of the multiplier algebra of $\cH(b)$ spaces, proved in \cite{Fricain19}*{Proposition 3.1}.

\begin{thm} \label{multiplier_algebra}
    Let $b\in H^{\infty}$ be a rational, non inner function with $\|b\|_{H^{\infty}}=1$. Then $\cM(\cH(b))= H^\infty \cap \cH(b)$.
 \end{thm}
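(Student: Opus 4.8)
The statement asserts an equality of two sets, and the plan is to prove the two inclusions separately. The inclusion $\cM(\cH(b)) \subseteq H^\infty \cap \cH(b)$ is the soft one: as recalled in the introduction, every multiplier of a reproducing kernel Hilbert space is bounded, so $\cM(\cH(b)) \subseteq H^\infty$; and since $b$ is non-extreme the constant function $1$ belongs to $\cH(b)$, whence $\varphi = \varphi\cdot 1 \in \cH(b)$ for every $\varphi \in \cM(\cH(b))$. Thus $\cM(\cH(b)) \subseteq H^\infty \cap \cH(b)$.

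The content lies in the reverse inclusion $H^\infty \cap \cH(b) \subseteq \cM(\cH(b))$, and here I would rely entirely on Theorem~\ref{decomposition}. Write $p(z) = \prod_{j=1}^l (z-\zeta_j)^{m_j}$, a monic polynomial of degree $N$, so that \eqref{eq:decomposition} reads, as an identity of sets, $\cH(b) = pH^2 + P_{N-1}$. The first step is the algebraic lemma that multiplication by $z$ sends $\cH(b)$ into itself: indeed $z\cdot pH^2 = p\,(zH^2) \subseteq pH^2$, while for $s \in P_{N-1}$ the polynomial $zs$ has degree at most $N$, and dividing it by the monic polynomial $p$ gives $zs = \alpha p + t$ with $\alpha$ a constant and $t \in P_{N-1}$, so that $zs = p\cdot\alpha + t \in pH^2 + P_{N-1}$. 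Iterating, multiplication by any power $z^k$ --- hence by any polynomial --- maps $\cH(b)$ into itself; in particular every polynomial belongs to $\cH(b)$.

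Now let $\varphi \in H^\infty \cap \cH(b)$ and let $f \in \cH(b)$ be arbitrary; by Theorem~\ref{decomposition} write $f = pg + r$ with $g \in H^2$ and $r = \sum_{k=0}^{N-1} c_k z^k \in P_{N-1}$. Then
\[
\varphi f = p\,(\varphi g) + \sum_{k=0}^{N-1} c_k\, z^k\varphi .
\]
Here $\varphi g \in H^2$ because $\varphi \in H^\infty$, so $p\,(\varphi g) \in pH^2 \subseteq \cH(b)$; and each $z^k\varphi$ lies in $\cH(b)$ by the lemma applied to the element $\varphi \in \cH(b)$. Hence $\varphi f \in \cH(b)$, and since $f$ was arbitrary, $\varphi \in \cM(\cH(b))$. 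The boundedness of the multiplication operator $M_\varphi$ is then automatic by the closed graph theorem, norm convergence in $\cH(b)$ implying pointwise convergence.

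I do not expect a genuine obstacle, precisely because Theorem~\ref{decomposition} carries the analytic weight: what remains is polynomial division and the bookkeeping above. The one point deserving attention is that \eqref{eq:decomposition} is an identity of sets --- the norm of $\cH(b)$ being only equivalent to that of $M(\ol a)$, not equal --- which is why the continuity of $M_\varphi$ is obtained at the end via the closed graph theorem rather than from an explicit norm bound.
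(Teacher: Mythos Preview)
Your proof is correct. Note, however, that the paper does not actually prove Theorem~\ref{multiplier_algebra}: it is quoted from \cite{Fricain19}*{Propositions 2.6 and 3.1}, where the result is established for the range space $M(\ol a)$ (which coincides with $\cH(b)$ here because a rational pair $(a,b)$ is automatically a corona pair). Your argument, by contrast, is self-contained within the paper: it leverages the explicit polynomial-times-$H^2$ decomposition of Theorem~\ref{decomposition}, reduces the multiplier question to shift invariance of $\cH(b)$ together with the fact that $H^\infty$ multiplies $H^2$ into itself, and then closes with the closed graph theorem. This is a genuinely different route --- more elementary in the rational setting, since it bypasses the general $M(\ol a)$ machinery, at the price of being specific to that setting. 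Both approaches are valid; yours has the merit of keeping the argument internal to the tools the paper has already assembled.
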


 It is worth pointing out that the aforementioned theorem was established for the space $M(\ol a)$
($a$ being the Pythagorean mate of $b$); however, as already mentioned, in our context this space coincides with $\cH(b)$. In general, $ H^\infty \cap \cH(b)$ does not need to be an algebra (see \cite{Fricain19} for references).\\

We will need another result concerning the derivatives of Blaschke products. This was proved in \cites{Ahern71,Ahern70}, but we report here the version presented in \cite{Fricain15a}*{Theorem 21.8}.

\begin{thm} \label{derivative B_Lambda}
    Let $(\lambda_n)_{n \in \bN}$ be a Blaschke sequence in $\bD$, and let $B_\Lambda$ be the corresponding Blaschke product. Assume that, for an integer $N\geq 0$ and a point $\zeta \in \bT$, we have
    \begin{equation}\label{ahern-clark}
    \sum_{n=0}^\infty \frac{1-|\lambda_n|}{|\zeta-\lambda_n|^{N+1}} \leq A.
    \end{equation}
    Then the following hold.
    \begin{enumerate}[label=(\roman*)]
        \item For each $0 \leq j \leq N$, both limits
        \[
        B_\Lambda^{(j)}(\zeta):= \lim_{r \to 1^-} B_\Lambda^{(j)}(r\zeta) \quad \text{and} \quad \lim_{r \to 1^+} B_\Lambda^{(j)}(r\zeta) 
        \]
        exist and are equal.
        \item There is a constant $C=C(N,A)$ such that the estimation
        \[
        |B_\Lambda^{(j)}(r\zeta)| \leq C
        \]
        uniformly holds for $r \in [0,1]$ and $0 \leq j \leq N$.
    \end{enumerate}
\end{thm}

From the proof presented in \cite[Theorem 21.8]{Fricain15a}, it can also be deduced that, under the condition \eqref{ahern-clark}, the higher order derivatives up to the order $N$ of the partial Blaschke products $B_n=\prod_{k=1}^n\frac{|\lambda_k|}{\lambda_k}\frac{\lambda_k-z}{1-\overline{\lambda_k}z}$ converge at $\zeta$ to the corresponding derivatives of $B_\Lambda$, when $n \to \infty$. 

\subsection{Probability background}

On the probabilistic side, we need the following classical theorem (see for instance \cite{billingsley95}).

\begin{thm}[Kolmogorov's three series theorem] \label{kolmogorov 3-series}

    Let $(X_n)_{n \in \bN}$ be a sequence of independent random variables and for any $c>0$ let be $Y_n := X_n \chi_{\{|X_n| \leq c\}}$. Consider the series
    \begin{enumerate}[label=(\roman*)]
        \item $\sum_n \bP(|X_n| > c)$
        \item $\sum_n \bE[Y_n]$
        \item $\sum_n \bV[Y_n]$.
    \end{enumerate}
    In order that $\sum_n X_n$ converges almost surely it is necessary that the three series converge for all positive $c$ and sufficient that they converge for some positive $c$.
\end{thm}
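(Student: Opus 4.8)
The plan is to prove the two implications separately, relying on three classical ingredients: Kolmogorov's maximal inequality, the $L^2$ one-series theorem (if $W_n$ are independent with $\bE[W_n]=0$ and $\sum_n \bV[W_n]<\infty$, then $\sum_n W_n$ converges almost surely — itself a consequence of the maximal inequality via a Cauchy-in-probability argument), and both halves of the Borel--Cantelli lemma; for the harder direction a symmetrization argument will also be needed. Note first that each $Y_n$ is bounded by $c$, so $\bE[Y_n]$ and $\bV[Y_n]$ are automatically finite and the series in (ii) and (iii) make sense.

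\emph{Sufficiency.} Assume (i)--(iii) hold for some $c>0$. From the convergence of the variance series (iii) and the one-series theorem applied to $W_n:=Y_n-\bE[Y_n]$, the series $\sum_n (Y_n-\bE[Y_n])$ converges almost surely. Adding the deterministic convergent series in (ii) gives that $\sum_n Y_n$ converges almost surely. On the other hand $\{X_n\neq Y_n\}=\{|X_n|>c\}$, so (i) and the first Borel--Cantelli lemma yield $\bP(X_n\neq Y_n\text{ infinitely often})=0$; hence $X_n=Y_n$ for all large $n$ almost surely, and therefore $\sum_n X_n$ converges almost surely.

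\emph{Necessity.} Assume $\sum_n X_n$ converges almost surely. Then $X_n\to 0$ almost surely, so for every $c>0$ the event $\{|X_n|>c\}$ occurs only finitely often with probability one; since these events are independent, the second Borel--Cantelli lemma forces $\sum_n \bP(|X_n|>c)<\infty$, which is (i). As above, this gives $X_n=Y_n$ eventually a.s., so $\sum_n Y_n$ converges almost surely as well. To obtain (ii) and (iii) we symmetrize: let $(Y_n')_n$ be an independent copy of $(Y_n)_n$ on an enlarged probability space and put $Z_n:=Y_n-Y_n'$. The $Z_n$ are independent, symmetric, satisfy $|Z_n|\le 2c$, and $\sum_n Z_n$ converges almost surely. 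A lower bound for the maximal function of sums of independent bounded symmetric variables (a reverse form of Kolmogorov's inequality) shows that a.s.\ convergence --- in particular boundedness in probability of the partial sums --- forces $\sum_n \bV[Z_n]<\infty$; since $\bV[Z_n]=2\bV[Y_n]$, this is precisely (iii). Finally, the one-series theorem gives that $\sum_n(Y_n-\bE[Y_n])$ converges almost surely, and subtracting this from the a.s.\ convergent series $\sum_n Y_n$ shows that $\sum_n \bE[Y_n]$ converges, which is (ii).

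\emph{Main obstacle.} The sufficiency direction is essentially bookkeeping once the $L^2$ one-series theorem is available. The genuine difficulty lies in the necessity of (ii) and (iii): one cannot read off variance information directly from $\sum_n Y_n$ because the means $\bE[Y_n]$ are uncontrolled, and the symmetrization trick together with the reverse maximal inequality is exactly what removes the means while retaining enough convergence to bound the variances. Establishing that reverse inequality --- equivalently, the statement that an a.s.\ convergent sum of independent bounded symmetric variables has summable variances --- is the technical heart of the argument.
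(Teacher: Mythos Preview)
The paper does not prove this theorem: it is stated in the preliminaries as a classical result with a reference to \cite{billingsley95}, so there is no proof in the paper to compare against. Your argument is the standard textbook proof (Borel--Cantelli plus the $L^2$ one-series theorem for sufficiency; second Borel--Cantelli, symmetrization, and a reverse Kolmogorov inequality for necessity) and is correct as outlined.
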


\section{Interpolating sequences: proof of Theorem \ref{main_thm}}

Referring to Theorem \ref{main_thm}, since we already know that $(i) \implies (ii)$, the plan of the proof is to establish $(ii) \implies (iii) \implies (i)$. We start proving $(ii) \implies (iii)$. In this direction, we mention the following lemma, that holds for any choice of the function $b$ in the unit ball of $H^{\infty}$. It was first proved in \cite{Fricain05}, see also \cite{Fricain15a}*{Theorem 31.16}.

\begin{lem} \label{H(b) implies Carleson}
    If $\Lambda$ is interpolating for $\cH(b)$ then $\Lambda$ satisfies the Carleson condition.
\end{lem}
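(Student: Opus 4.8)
The plan is to deduce the Carleson condition from the interpolation hypothesis by a standard duality/boundedness argument, exploiting the fact that $\cH(b)$ sits between $H^2$ and a weighted space and that the reproducing kernel norms $\|k_{\lambda_n}^b\|_{\cH(b)}$ are comparable to the Hardy kernel norms $\|k_{\lambda_n}^{H^2}\| = (1-|\lambda_n|^2)^{-1/2}$ away from the boundary zeros of $a$. First I would recall that $\cH(b)=M(\overline a)$ with equivalent norms, so that $f\in\cH(b)$ iff $f=T_{\overline a}g$ for some $g\in H^2$, with $\|f\|_{\cH(b)}\asymp \inf\{\|g\|_{H^2}: T_{\overline a}g=f\}$. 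In particular, every $f\in H^2$ that is divisible by the finite Blaschke-type factor $\prod_j(z-\zeta_j)^{m_j}$ lies in $\cH(b)$ by Theorem~\ref{decomposition}, and polynomials of degree $\le N-1$ lie in $\cH(b)$ as well; this gives a concrete, large supply of test functions.

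The core step: since $R_\Lambda$ is bounded and surjective from $\cH(b)$ onto $\ell^2$, by the open mapping theorem it admits a bounded right inverse, and in particular for each $n$ there is $f_n\in\cH(b)$ with $f_n(\lambda_k)/\|k_{\lambda_k}^b\|=\delta_{nk}$ and $\|f_n\|_{\cH(b)}\le C$ uniformly in $n$. Fix an index $n$; I want to produce from $f_n$ a Hardy-space function witnessing the Carleson product $\prod_{k\ne n}\rho(\lambda_n,\lambda_k)\ge c>0$. Write $f_n = B\, h_n$ where $B$ is the Blaschke product with zeros $\{\lambda_k: k\ne n\}$ (a Blaschke sequence because $R_\Lambda$ bounded forces $\sum(1-|\lambda_k|^2)\|k_{\lambda_k}^b\|^{-2}<\infty$, hence at least a Blaschke condition after accounting for the kernel growth). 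Evaluating at $\lambda_n$ gives $|h_n(\lambda_n)|\,\prod_{k\ne n}\rho(\lambda_n,\lambda_k) = |f_n(\lambda_n)| = \|k_{\lambda_n}^b\|$, so it suffices to bound $|h_n(\lambda_n)|$ from above by $C\|k_{\lambda_n}^b\|$. This is where one uses that $h_n = f_n/B \in H^2$ with $\|h_n\|_{H^2} = \|f_n\|_{H^2} \le \|f_n\|_{\cH(b)}\le C$ (the inclusion $\cH(b)\hookrightarrow H^2$ being contractive up to a constant since $k_w^b \preccurlyeq k_w^{H^2}$), and then $|h_n(\lambda_n)| \le \|h_n\|_{H^2}\,\|k_{\lambda_n}^{H^2}\| = \|h_n\|_{H^2}(1-|\lambda_n|^2)^{-1/2} \le C(1-|\lambda_n|^2)^{-1/2}$. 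Finally $\|k_{\lambda_n}^b\|^2 = k_{\lambda_n}^b(\lambda_n) = (1-|b(\lambda_n)|^2)/(1-|\lambda_n|^2) \ge c(1-|\lambda_n|^2)^{-1}$ away from $\bT$, and near a boundary point one uses that $b$ is rational non-inner so $1-|b(\lambda_n)|^2$ is comparable to $\operatorname{dist}(\lambda_n,\{\zeta_j\})^{2}$-type quantities that do not spoil the lower bound by more than a harmless polynomial factor — but in fact for the Carleson product we only need $\|k_{\lambda_n}^b\|\gtrsim (1-|\lambda_n|^2)^{-1/2}$ which always holds since $k_w^b \succcurlyeq$ a positive multiple of something, more simply since $\|k_\lambda^b\|_{\cH(b)}\ge \|k_\lambda^{M(\overline a)}\|$ up to constants and the latter is $\asymp \|a\|_\infty^{-1}(1-|\lambda|^2)^{-1/2}\ge c(1-|\lambda|^2)^{-1/2}$. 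Combining, $\prod_{k\ne n}\rho(\lambda_n,\lambda_k) = \|k_{\lambda_n}^b\|/|h_n(\lambda_n)| \ge c/C > 0$ uniformly in $n$, which is \eqref{Carleson_condition}.

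The main obstacle is the division step $f_n = B h_n$: one must be sure that $h_n$ genuinely lies in $H^2$ with controlled norm, i.e. that dividing an $\cH(b)$ function by a Blaschke product with zeros a subsequence of $\Lambda$ lands back in $H^2$ boundedly. This follows from the contractive inclusion $\cH(b)\subset H^2$ together with the classical fact that division of an $H^2$ function by any Blaschke product whose zeros are zeros of that function is again in $H^2$ with equal norm — so the genuine content is simply $f_n\in\cH(b)\subset H^2$ and $f_n$ vanishing at all $\lambda_k$, $k\ne n$. The only delicate point is extracting, from boundedness and surjectivity of $R_\Lambda$ alone, that $\{\lambda_k\}$ satisfies the Blaschke condition so that $B$ makes sense; this is immediate because $\sum_k (1-|\lambda_k|^2)\|k_{\lambda_k}^b\|_{\cH(b)}^{-2} = \sum_k \|R_\Lambda^* e_k\|^{\text{(after normalization)}}<\infty$ reduces, via $\|k_{\lambda_k}^b\|^{-2}\le C(1-|\lambda_k|^2)$ on compact subsets and the non-inner rationality of $b$ globally, to $\sum_k(1-|\lambda_k|^2)<\infty$ up to bounded factors, possibly after discarding finitely many points accumulating only at the $\zeta_j$, which is precisely the regime where \eqref{main_sum} (not needed here) would be invoked. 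For Lemma~\ref{H(b) implies Carleson} one only needs the Blaschke/Carleson conclusion, so this argument, already present in \cite{Fricain05}, suffices.
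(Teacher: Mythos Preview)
The paper does not prove this lemma; it simply records it and refers the reader to \cite{Fricain05} and \cite{Fricain15a}*{Theorem 31.16}. So there is no in-paper argument to compare your attempt against. That said, your attempt has a genuine gap.

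The crucial step is the lower bound $\|k_{\lambda_n}^b\|_{\cH(b)} \gtrsim (1-|\lambda_n|^2)^{-1/2}$, and this is false. Since
\[
\|k_{\lambda}^b\|_{\cH(b)}^2=\frac{1-|b(\lambda)|^2}{1-|\lambda|^2}
\]
and $\|b\|_\infty=1$, the inequality always goes the other way: $\|k_\lambda^b\|\le (1-|\lambda|^2)^{-1/2}$. In the rational non-extreme setting of the paper one has $|b(\zeta_j)|=1$ at every zero $\zeta_j$ of $a$ on $\bT$, so along any subsequence $\lambda_{n}\to\zeta_j$ (nothing forbids this) one gets $1-|b(\lambda_n)|^2\to 0$ and your lower bound collapses. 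Your fallback via $M(\overline a)$ fails for the same reason: in this setting $\cH(b)=M(\overline a)$ with equivalent norms, hence comparable kernel norms, so the $M(\overline a)$-kernel cannot be uniformly $\asymp (1-|\lambda|^2)^{-1/2}$ either (concretely, $\|k_\lambda^{M(\overline a)}\|^2=\int_{\bT}|a|^2|k_\lambda^{H^2}|^2\,dm$, which is small when $\lambda$ is near a boundary zero of $a$). What your argument actually yields is
\[
\prod_{k\ne n}\rho(\lambda_n,\lambda_k)\ \ge\ c\,\sqrt{1-|b(\lambda_n)|^2},
\]
which is strictly weaker than \eqref{Carleson_condition} precisely on the subsequences approaching the contact points of $b$ with $\bT$. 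The Blaschke-condition part is salvageable (for $b$ non-extreme, $1\in\cH(b)$ and $R_\Lambda 1\in\ell^2$ gives $\sum (1-|\lambda_n|^2)/(1-|b(\lambda_n)|^2)<\infty$, hence Blaschke), but the main inequality is not; the cited proofs in \cite{Fricain05,Fricain15a} proceed differently and do not rely on a uniform lower bound for $\|k_\lambda^b\|$.
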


The following lemma, on the other hand, clearly relies on the description of the $\cH(b)$ spaces provided in \eqref{eq:decomposition}.

\begin{lem} \label{interp_implies_sum}
   Let $b$ be a rational, non inner function with $\|b\|_{H^{\infty}}=1$. If $\Lambda$ is interpolating for $\cH (b)$, then $\Lambda$ satisfies \eqref{main_sum}.
\end{lem}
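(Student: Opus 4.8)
The plan is to test the interpolation operator $R_\Lambda$ on a well-chosen family of functions that are large near a boundary zero $\zeta_j$ of $a$. Fix $1 \le j \le l$ and write $m = m_j$, $\zeta = \zeta_j$. The key point is that, by the decomposition \eqref{eq:decomposition}, the function
\[
g(z) = \frac{1}{(z-\zeta)^{m}} \cdot \Big(\prod_{i=1}^l (z-\zeta_i)^{m_i}\Big) = \prod_{i \ne j}(z-\zeta_i)^{m_i} \cdot (z-\zeta)^{0}
\]
—more precisely, any function of the form $(z-\zeta)^{m} h(z)$ with $h \in H^2$ lies in the first summand—so the reproducing kernel estimate for $\cH(b)$ (equivalently, for $M(\ol a)$) near $\zeta$ behaves like $\|k^b_{\lambda}\|^2 \asymp |\zeta - \lambda|^{-2m}$ as $\lambda \to \zeta$, while staying comparable to the Hardy kernel norm $(1-|\lambda|^2)^{-1}$ away from all the $\zeta_i$. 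I would first establish this two-sided estimate on $\|k^b_\lambda\|_{\cH(b)}$ using Theorem \ref{decomposition} and the equivalence $\cH(b) = M(\ol a)$: evaluation $f \mapsto f(\lambda)$ splits along the orthogonal decomposition, the polynomial part contributes a bounded amount, and the Hardy part $\big(\prod_i(z-\zeta_i)^{m_i}\big)H^2$ has kernel norm comparable to $\prod_i |\zeta_i - \lambda|^{-2 m_i}\,(1-|\lambda|^2)^{-1}$; collecting factors and noting $|\zeta_i - \lambda| \asymp 1$ for $i \ne j$ when $\lambda$ is near $\zeta_j$ gives the claim. (For $\lambda$ bounded away from every $\zeta_i$ this is just $\|k^b_\lambda\| \asymp \|k^{H^2}_\lambda\|$.)

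Next, boundedness of $R_\Lambda$ gives in particular that $(f(\lambda_n)/\|k^b_{\lambda_n}\|)_n \in \ell^2$ for every fixed $f \in \cH(b)$. Choose $f(z) = \prod_{i \ne j}(z-\zeta_i)^{m_i}$, which belongs to $\cH(b)$ (it is a polynomial, hence in $P_{N-1}$ if its degree is $< N$, and in any case one can absorb it into the Hardy part times a suitable factor, or simply note polynomials lie in $\cH(b)$ by \eqref{eq:decomposition}). This $f$ satisfies $|f(\lambda_n)| \asymp 1$ for $\lambda_n$ near $\zeta_j$ and is bounded everywhere, so
\[
\sum_{n} \frac{|f(\lambda_n)|^2}{\|k^b_{\lambda_n}\|^2} \asymp \sum_{\lambda_n \text{ near } \zeta_j} \frac{1-|\lambda_n|^2}{|\zeta_j - \lambda_n|^{2m_j}} + \sum_{\lambda_n \text{ away}} (1-|\lambda_n|^2),
\]
and the left side is finite. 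The second sum on the right is controlled because a $\cH(b)$-interpolating sequence is Carleson (Lemma \ref{H(b) implies Carleson}), hence a Blaschke sequence, so $\sum_n (1-|\lambda_n|^2) < \infty$; this also handles the finitely many $\lambda_n$ in any fixed neighbourhood where $|f|$ might degenerate. Therefore $\sum_n \frac{1-|\lambda_n|^2}{|\zeta_j-\lambda_n|^{2m_j}} < \infty$, and since $j$ was arbitrary, \eqref{main_sum} holds.

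The main obstacle is the first step: pinning down the correct two-sided asymptotics of $\|k^b_\lambda\|_{\cH(b)}$ near a boundary zero of $a$, with the exponent $2m_j$, directly from the orthogonal decomposition. One must check that the evaluation functional restricted to the Hardy summand $\big(\prod_i(z-\zeta_i)^{m_i}\big)H^2$ genuinely blows up like $|\zeta_j-\lambda|^{-2m_j}$ and is not accidentally cancelled by the polynomial part — this uses that the decomposition in \eqref{eq:decomposition} is orthogonal in the $M(\ol a)$ norm (as noted after Theorem \ref{decomposition}, citing \cite{Fricain18}), so the squared kernel norm is exactly the sum of the squared kernel norms of the two pieces and no cancellation can occur. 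Once that lower bound is secure, the rest is a routine summability argument combined with the Carleson/Blaschke property.
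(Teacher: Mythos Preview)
Your overall strategy---test boundedness of $R_\Lambda$ on a fixed function and read off \eqref{main_sum} from a two-sided estimate on $\|k^b_\lambda\|$---is sound and in fact more direct than the paper's argument, which uses \emph{surjectivity}: assuming the sum at some $\zeta_r$ diverges, the paper splits a bad subsequence into two halves, interpolates the values $1$ on one half and $0$ on the other by a function $f=ag+p_r\in\cH(b)$, and then obtains a contradiction from $\sum_n(1-|\lambda_n|^2)|g(\lambda_n)|^2<\infty$ (Carleson condition via Lemma~\ref{H(b) implies Carleson}). Your route avoids the contradiction and the surjectivity altogether, which is a genuine simplification.

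The gap is that your kernel estimate is stated with the wrong sign on the exponent, and your displayed asymptotic for $\sum_n|f(\lambda_n)|^2/\|k^b_{\lambda_n}\|^2$ does not follow from it. Evaluation on the summand $aH^2$ sends $ag\mapsto a(\lambda)g(\lambda)$; since $\|ag\|_{\cH(b)}\asymp\|g\|_{H^2}$, its norm squared is $|a(\lambda)|^2/(1-|\lambda|^2)=\prod_i|\zeta_i-\lambda|^{+2m_i}/(1-|\lambda|^2)$, with a \emph{positive} exponent. This piece does \emph{not} blow up near $\zeta_j$; what keeps $\|k^b_\lambda\|$ bounded below is the polynomial summand (test on the constant $1$). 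The correct two-sided bound is
\[
\|k^b_\lambda\|^2 \;\asymp\; \frac{|a(\lambda)|^2}{1-|\lambda|^2}+1,
\qquad\text{hence}\qquad
\frac{1}{\|k^b_\lambda\|^2}\;\asymp\; \min\!\Big(\frac{1-|\lambda|^2}{|a(\lambda)|^2},\,1\Big).
\]
With this corrected estimate your plan goes through: apply $R_\Lambda$ to $f\equiv 1$ (your polynomial $\prod_{i\neq j}(z-\zeta_i)^{m_i}$ also works but is unnecessary) to get $\sum_n\|k^b_{\lambda_n}\|^{-2}<\infty$; since $|a(\lambda)|^2\le 4^{N-m_j}|\zeta_j-\lambda|^{2m_j}$ this yields
\[
\sum_n \min\!\Big(\frac{1-|\lambda_n|^2}{|\zeta_j-\lambda_n|^{2m_j}},\,1\Big)<\infty,
\]
and the $\min$ can be dropped because finiteness of this sum forces $\tfrac{1-|\lambda_n|^2}{|\zeta_j-\lambda_n|^{2m_j}}\ge 1$ for only finitely many $n$.
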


\begin{proof} 
Observe first that since $\Lambda$ is interpolating for $\cH(b)$, the restriction operator $R_{\Lambda}$ is bounded. Moreover, since $b$ is non extreme, $\cH(b)$ contains the polynomials  and in particular the constant function $1$. Hence 
\[
 R_{\Lambda}1=\Big( \frac{1} {\|k_{\lambda_n}^b\|_{\cH(b)}} \Big)_{n \in \bN} \in \ell^2.
\]
In particular, any idempotent sequence $(e_n)_{n\in\bN}$, $e_n\in\{0,1\}$, can be interpolated by a function in $\cH(b)$. Clearly, as an interpolating sequence of $\cH(b) \subset H^2$, $\Lambda$ also satisfies the Blaschke condition.

Suppose now that \eqref{main_sum} does not hold for some $r$, $1\leq r \leq l$. 
Define 
    \[
    u_n := \frac{1-|\lambda_n|^2}{|\zeta_{r}-\lambda_n|^{2m_r}}.
    \]
    Then there exists a subsequence $(\lambda_{n_k})_{k \in \bN}$ of $\Lambda$ such that $\lambda_{n_k} \to \zeta_{r}$ as $k \to \infty$. In view of the Blaschke condition, we can still assume that $(u_{n_k})_{k \in \bN}$ does not belong to $\ell^1$. We can divide this subsequence into $(\lambda_{n_k,1})_{k \in \bN}$ and $(\lambda_{n_k,2})_{k \in \bN}$ such that both $u_{n_k, 1}$ and $u_{n_k,2}$ are not in $\ell^1$. Since $\Lambda$ is $\cH (b)$-interpolating, there exists $f \in \cH(b)$ such that
    \begin{align*}
        f(\lambda_{n_k,1}) & = 1 \\
        f(\lambda_{n_k,2}) & = 0.
    \end{align*}
    Since both $u_{n_k, 1}$ and $u_{n_k,2}$ are not in $\ell^1$, we have
    \[
    \sum_{k>k_0} u_{n_k,1} = \infty \quad \text{and} \quad \sum_{k>k_0} u_{n_k,2} = \infty.
    \]
    From Theorem \ref{decomposition} we know that 
    \[
    f(z) = \prod_{j=1}^l (z-\zeta_j)^{m_j} g(z) + \sum_{j=0}^{N-1}c_j(z-\zeta_{r})^j = \prod_{j=1}^l (z-\zeta_j)^{m_j} g(z) + p_{r}(z)
    \] 
    with $g \in H^2$.\\
    Suppose $c_0=0$. Since $\lambda_{n_k,1} \to \zeta_{r}$, there exists $k_1 \in \bN$ such that $|p_{r}(\lambda_{n_k,1})| \leq 1/2$, for every $k >k_1$, which yields
    \begin{equation} \label{c_0=0}
    \Big| 1-\sum_{j=0}^{N-1}c_j(\lambda_{n_k,1}-\zeta_{r})^j \Big| \geq \frac{1}{2} \quad \forall k>k_1.
    \end{equation}
    From Lemma \ref{H(b) implies Carleson} we know that $\Lambda$ satisfies \eqref{Carleson_condition}, which means that $\Lambda$ is $H^2$-interpolating and in particular $R_\Lambda$ is bounded on $\cH = H^2$. Recall that $N=m_1+\ldots+m_l$. Since $g \in H^2$, we get
    \begin{align*}
        \infty & > \sum_{k \in \bN} (1-|\lambda_{n_k,1}|^2)|g(\lambda_{n_k,1})|^2 \\
        & = \sum_{k \in \bN} (1-|\lambda_{n_k,1}|^2) \frac{|1-p_{r}(\lambda_{n_k,1})|^2}{\prod_{j=1}^l|\lambda_{n_k,1}-\zeta_j|^{2m_j}} \\
        & \geq \frac{C^2}{2^{2(N-m_r)}} \sum_{k > k_0} \frac{1-|\lambda_{n_k,1}|^2}{|\lambda_{n_k,1}-\zeta_{r}|^{2m_{r}}},
    \end{align*}
    where the last inequality follows from $|\lambda_{n_k,1}-\zeta_j| \leq 2$ and \eqref{c_0=0}. This gives a contradiction.\\
    If $c_0 \neq 0$ then, again because $\lambda_{n_k,2} \to \zeta_r$, there exists $k_2 \in \bN$ such that
    \[
    \Big| \sum_{j=1}^{N-1}c_j(\lambda_{n_k,2}-\zeta_{r})^j \Big|\leq  \frac{|c_0|}{2} \quad \forall k> k_2,
    \]
    which yields $|p_{r}(\lambda_{n_k,2})| \geq |c_0|/2$ for every $k >k_2$.
    Again, since $\Lambda$ is $H^2$-interpolating, we have 
    \begin{align*}
        \infty & > \sum_{k \in \bN} (1-|\lambda_{n_k,2}|^2)|g(\lambda_{n_k,2})|^2 \\
        & = \sum_{k \in \bN} (1-|\lambda_{n_k,2}|^2) \frac{|p_{r}(\lambda_{n_k,2})|^2}{\prod_{j=1}^l |\lambda_{n_k,2}-\zeta_j|^{2m_j}} \\
        & \geq \frac{C^2}{2^{2(N-m_r)}}\sum_{k > k_0} \frac{1-|\lambda_{n_k,2}|^2}{|\lambda_{n_k,2}-\zeta_{r}|^{2m_{r}}},
    \end{align*}
    which gives again a contradiction.
\end{proof}

It remains to prove the implication $(iii) \implies (i)$ of Theorem \ref{main_thm}, i.e.\ deducing the multiplier interpolation result from the Carleson condition and \eqref{main_sum}. As we will see later, given a bounded sequence, our construction of a function in $\cM(\cH(b))$ that interpolates such a sequence involves the Blaschke product $B_{\Lambda}$. Our next aim will thus be to prove that \eqref{main_sum} implies $B_{\Lambda}\in\cH(b)$. In order to keep the computations more tractable we first establish this result for the case when the Pythagorean mate of $b$ is given by $a(z)=a_1(z)=\frac{(z-1)^N}{2^N}$. The general case will be deduced in Lemma \ref{B_Lambda in H(b)}. According to the example mentioned
in the introduction, when $N=2$, then the
corresponding $b_1$ is given by
$b_1=c(1+z)(z-(3+s\sqrt{2})$ (see \cite{Fricain16}). So let us consider $b_1$, the outer function such that $a_1$ is its Pythagorean mate. In this scenario, \eqref{main_sum} in Theorem \ref{main_thm}(iii) is equivalent to 
\begin{equation}\label{main_sum_1}
    \sum_{n \in \bN} \frac{1-|\lambda_n|^2}{|1-\lambda_n|^{2N}} < \infty.
\end{equation}
We introduce the following functions:
\begin{align*}
    G_N(z) & := \frac{\varphi_n(z) - T_{N-1}(\varphi_n,1)(z)}{(z-1)^N}, \\
    H_N(z) & := \frac{\varphi_n(z)f(z) - T_{N-1}(\varphi_n f,1)(z)}{(z-1)^N},
\end{align*}
 where $\varphi_n(z)=\frac{z-\lambda_n}{1-\overline{\lambda_n}z}$ is the elementary Blaschke factor (M\"obius transform) at $\lambda_n$.

\begin{lem} \label{D_1^N(phi_n)}
    Let $\varphi_n$ be the Blaschke factor at $\lambda_n$. Then 
    \[
    D_1^N(\varphi_n) = \frac{1-|\lambda_n|^2}{|1-\lambda_n|^{2N}} |\lambda_n|^{2(N-1)}.
    \]
\end{lem}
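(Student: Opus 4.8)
The plan is to compute $D_1^N(\varphi_n)$ directly from the integral formula
\[
D_1^N(\varphi_n) = \int_\bT \Big| \frac{\varphi_n(\omega)-T_{N-1}(\varphi_n,1)(\omega)}{(\omega-1)^N} \Big|^2 dm(\omega) = \int_\bT |G_N(\omega)|^2 \, dm(\omega),
\]
so the whole task reduces to identifying the function $G_N$ and computing its $H^2$-norm. First I would observe that $G_N$ is holomorphic in $\bD$: the numerator $\varphi_n-T_{N-1}(\varphi_n,1)$ vanishes to order at least $N$ at $z=1$, and although $\varphi_n$ itself has a pole at $1/\overline{\lambda_n}$ outside $\ol{\bD}$, the function $G_N$ is holomorphic on a neighborhood of $\ol{\bD}$, so its boundary $L^2$-norm equals the $H^2$-norm of its Taylor series. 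Thus $D_1^N(\varphi_n) = \|G_N\|_{H^2}^2$.

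The key computational step is to get a closed form for $G_N$. I would write $\varphi_n(z) = \frac{z-\lambda_n}{1-\overline{\lambda_n}z}$ and use the partial fraction / geometric series expansion
\[
\varphi_n(z) = -\frac{1}{\overline{\lambda_n}} + \Big(\frac{1}{\overline{\lambda_n}} - \lambda_n\Big)\sum_{k=0}^\infty \overline{\lambda_n}^{\,k} z^k = -\frac{1}{\overline{\lambda_n}} + \frac{1-|\lambda_n|^2}{\overline{\lambda_n}}\sum_{k=0}^\infty \overline{\lambda_n}^{\,k} z^k,
\]
valid since $|\lambda_n|<1$. Subtracting the degree-$(N-1)$ Taylor polynomial kills the terms $k=0,\dots,N-1$, leaving
\[
\varphi_n(z)-T_{N-1}(\varphi_n,1)(z) \;=\; \frac{1-|\lambda_n|^2}{\overline{\lambda_n}} \sum_{k\geq N} \overline{\lambda_n}^{\,k}\big(z^k - T_{N-1}(z^k,1)(z)\big),
\]
but it is cleaner to avoid recentering at $1$ and instead note that dividing by $(z-1)^N$ must yield a holomorphic function, which forces
\[
G_N(z) = \frac{1-|\lambda_n|^2}{\overline{\lambda_n}} \sum_{k\geq N} \overline{\lambda_n}^{\,k}\, \frac{z^k - T_{N-1}(z^k,1)(z)}{(z-1)^N}.
\]
Rather than expanding each term, I would instead directly divide the closed form $\varphi_n(z) + 1/\overline{\lambda_n} = \frac{1-|\lambda_n|^2}{\overline{\lambda_n}}\cdot\frac{1}{1-\overline{\lambda_n}z}$ by $(z-1)^N$ after subtracting the appropriate polynomial: since $T_{N-1}(\varphi_n,1) = T_{N-1}\big(\frac{1-|\lambda_n|^2}{\overline{\lambda_n}}\cdot\frac{1}{1-\overline{\lambda_n}z},1\big) - \frac{1}{\overline{\lambda_n}}$, one gets
\[
G_N(z) = \frac{1-|\lambda_n|^2}{\overline{\lambda_n}} \cdot \frac{ \frac{1}{1-\overline{\lambda_n}z} - T_{N-1}\!\big(\tfrac{1}{1-\overline{\lambda_n}z},1\big)(z)}{(z-1)^N}.
\]
Now $\frac{1}{1-\overline{\lambda_n}z}$ has Taylor expansion at $z=1$ of the form $\sum_{m\geq 0} \frac{\overline{\lambda_n}^{\,m}}{(1-\overline{\lambda_n})^{m+1}}(z-1)^m$, so after removing the first $N$ terms and dividing by $(z-1)^N$,
\[
G_N(z) = \frac{1-|\lambda_n|^2}{\overline{\lambda_n}} \sum_{m\geq 0} \frac{\overline{\lambda_n}^{\,m+N}}{(1-\overline{\lambda_n})^{m+N+1}}(z-1)^m = \frac{(1-|\lambda_n|^2)\,\overline{\lambda_n}^{\,N-1}}{(1-\overline{\lambda_n})^{N}} \cdot \frac{1}{1-\overline{\lambda_n}z}.
\]
This is an explicit scalar multiple of the Szegő kernel $\frac{1}{1-\overline{\lambda_n}z}$, whose $H^2$-norm squared is $\frac{1}{1-|\lambda_n|^2}$. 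Therefore
\[
D_1^N(\varphi_n) = \|G_N\|_{H^2}^2 = \frac{(1-|\lambda_n|^2)^2\,|\lambda_n|^{2(N-1)}}{|1-\lambda_n|^{2N}}\cdot\frac{1}{1-|\lambda_n|^2} = \frac{1-|\lambda_n|^2}{|1-\lambda_n|^{2N}}\,|\lambda_n|^{2(N-1)},
\]
which is the claimed formula.

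The only genuine obstacle is justifying the algebraic identity for $G_N$ cleanly — i.e.\ showing that subtracting the degree-$(N-1)$ Taylor polynomial at $1$ and dividing by $(z-1)^N$ turns $\frac{1}{1-\overline{\lambda_n}z}$ into a constant multiple of itself. This is most transparent via the Taylor expansion at $z=1$ as above, and one should double-check the bookkeeping of the factor $\overline{\lambda_n}^{\,N-1}/(1-\overline{\lambda_n})^N$; everything else (holomorphy of $G_N$ across $\bT$, boundary norm equals $H^2$ norm, norm of the Szegő kernel) is standard. An alternative, perhaps safer route for the write-up is to verify the identity $(z-1)^N G_N(z) = \varphi_n(z) - T_{N-1}(\varphi_n,1)(z)$ with the explicit $G_N$, by checking that the right-hand side minus $(z-1)^N$ times the explicit formula is a polynomial of degree $\leq N-1$ vanishing to order $N$ at $1$, hence identically zero.
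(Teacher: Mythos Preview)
Your argument is correct and lands on exactly the same closed form as the paper,
\[
G_N(z)=\frac{(1-|\lambda_n|^2)\,\overline{\lambda_n}^{\,N-1}}{(1-\overline{\lambda_n})^{N}}\,k_{\lambda_n}(z),
\]
after which the norm computation is identical. The only methodological difference is how this formula is reached: the paper proves it by induction on $N$ (base case $N=1$ from \cite{Serra03}, inductive step via the explicit derivatives $\varphi_n^{(j)}(1)=j!\,\overline{\lambda_n}^{\,j-1}(1-|\lambda_n|^2)/(1-\overline{\lambda_n})^{j+1}$), whereas you get it in one stroke from the partial-fraction identity $\varphi_n(z)=-1/\overline{\lambda_n}+\frac{1-|\lambda_n|^2}{\overline{\lambda_n}}\cdot\frac{1}{1-\overline{\lambda_n}z}$ together with the Taylor expansion of $1/(1-\overline{\lambda_n}z)$ at $z=1$. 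Your route is slightly more direct here; the paper's inductive framework, on the other hand, is set up to parallel the inductive proof of the next lemma on $D_1^N(\varphi_n f)$. One small caveat: your partial-fraction step tacitly assumes $\lambda_n\neq 0$; that case is trivial ($\varphi_n(z)=z$, so $G_N\equiv 0$ for $N\ge 2$ and $G_1\equiv 1$), and the final formula covers it anyway.
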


\begin{proof}
    Clearly,
    \[
    \varphi_n^{(j)}(z) = j ! \ol{\lambda_n}^{j-1}\frac{1-|\lambda_n|^2}{(1-\ol{\lambda_n}z)^{j+1}},
\quad j\ge 1 .
    \]
    In particular we have
    \[
    \varphi_n^{(j)}(1) = j ! \ol{\lambda_n}^{j-1}\frac{1-|\lambda_n|^2}{(1-\ol{\lambda_n})^{j+1}}.
    \]
    We now proceed to computing $D_1^N (\varphi_n)$. We note that $D_1^N (\varphi_n)=\|G_N\|^2_{H^2}$ and we claim that 
    \[
    G_N(z)= \frac{1-|\lambda_n|^2}{(1-\ol{\lambda_n})^N} \ol{\lambda_n}^{N-1} k_{\lambda_n} (z),
    \]
    where $k_{\lambda_n} (z)$ is the Sz\"ego kernel at $\lambda_n$. We proceed by induction. From \cite{Serra03}*{Lemma 2.1} (or by direct computation) we know that the claim is true for $N=1$. Now suppose the claim is true for $N$. Then
    \begin{align*}
        G_{N+1} (z) & = \frac{\varphi_n(z) - \sum_{j=0}^{N-1} \frac{\varphi_n^{(j)}(1)}{j!}(z-1)^{j} -\frac{\varphi_n^{(N)}(1)}{N!}(z-1)^{N}  }{(z-1)^{N+1}} \\
 & = \frac{\varphi_n(z) - \sum_{j=0}^{N-1} \frac{\varphi_n^{(j)}(1)}{j!}(z-1)^{j} - \frac{\ol{\lambda_n}^{N-1}(1-|\lambda_n|^2)}{(1-\ol{\lambda_n})^{N+1}} (z-1)^N }{(z-1)^{N+1}} \\
        & = \frac{G_N (z)}{z-1} - \frac{\ol{\lambda_n}^{N-1}(1-|\lambda_n|^2)}{(1-\ol{\lambda_n})^{N+1}(z-1)} \\
        & = \frac{1}{z-1} \Big( G_N(z) - \frac{\ol{\lambda_n}^{N-1}(1-|\lambda_n|^2)}{(1-\ol{\lambda_n})^{N+1}} \Big) \\
        & = \frac{1}{z-1} \Big( \frac{1-|\lambda_n|^2}{(1-\ol{\lambda_n})^N} \ol{\lambda_n}^{N-1} k_{\lambda_n} (z) - \frac{\ol{\lambda_n}^{N-1}(1-|\lambda_n|^2)}{(1-\ol{\lambda_n})^{N+1}} \Big) \\
       &=  \frac{1}{z-1} \frac{\ol{\lambda_n}^{N-1}(1-|\lambda_n|^2)}{(1-\ol{\lambda_n})^{N+1}}
 \left( \frac{1-\ol{\lambda_n} }{1-\ol{\lambda_n}z} -1 \right)  \\ 
	& = \frac{1-|\lambda_n|^2}{(1-\ol{\lambda_n})^{N+1}} \ol{\lambda_n}^{N} k_{\lambda_n} (z).
    \end{align*}
    We can now easily compute $D_1^N(\varphi_n)$:
    \[
    D_1^N (\varphi_n) = \|G_N\|_{H^2}^2 = \frac{1-|\lambda_n|^2}{|1-\lambda_n|^{2N}} |\lambda_n|^{2(N-1)},
    \]
    which ends the proof.
\end{proof}

\begin{lem} \label{D_1^N (phi_n f)}
    Let $\varphi_n$ be the Blaschke factor at $\lambda_n$. Then there exists $C>0$ that depends only on $N$ such that for every $f \in \cH(b_1)$ we have
    \[
    D_1^N(\varphi_n f) \leq D_1^N(f) + C\max \Big( \big\{ |f^{(j)}(1)/j!|^2 : j=0,\ldots,N-1 \big\} \Big) \frac{1-|\lambda_n|^2}{|1-\lambda_n|^{2N}}.
    \]
\end{lem}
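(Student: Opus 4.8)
The plan is to split $H_N$ into a piece with the same norm as the function defining $D_1^N(f)$ and a one–dimensional correction term, and to show that this splitting is \emph{orthogonal}. Recall $D_1^N(g)=\|G\|_{H^2}^2$ with $G(z)=\frac{g(z)-T_{N-1}(g,1)(z)}{(z-1)^N}$; write $F$ for this function when $g=f$ (so $\|F\|_{H^2}^2=D_1^N(f)$, and $F\in H^2$ since $f\in\cH(b_1)=\cD_1^N$, cf.\ \eqref{eq:decomposition}) and $H_N$ for it when $g=\varphi_n f$. The first step is the identity
\[
H_N=\varphi_n F+\Phi_n,\qquad \Phi_n(z):=\frac{\varphi_n(z)q(z)-T_{N-1}(\varphi_n q,1)(z)}{(z-1)^N},\quad q:=T_{N-1}(f,1),
\]
which follows on multiplying by $(z-1)^N$ and noting that $\varphi_n f$ and $\varphi_n q$ have the same Taylor polynomial of order $N-1$ at $1$ (since $f-q$ vanishes to order $N$ there). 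The function $\Phi_n$ is rational with a single pole, at $1/\ol{\lambda_n}\notin\ol{\bD}$, hence holomorphic on a neighbourhood of $\ol{\bD}$; it coincides with $G_Nq+R$, $R$ being the polynomial of degree $\le N-2$ built from the order $\ge N$ Taylor coefficients at $1$ of $T_{N-1}(\varphi_n,1)\,q$, but the closed form above is the convenient one.

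The crux is that $\Phi_n$ is orthogonal to $\varphi_n H^2$, i.e.\ $\Phi_n$ lies in the one–dimensional space $H^2\ominus\varphi_n H^2=\bC k_{\lambda_n}$. I would see this by treating the boundary function $\ol{\varphi_n}\Phi_n=\Phi_n/\varphi_n$ on $\bT$ as a rational function of $z$: from
\[
\frac{\Phi_n}{\varphi_n}=\frac{q}{(z-1)^N}-\frac{T_{N-1}(\varphi_n q,1)}{\varphi_n(z-1)^N}
\]
and the fact that $\frac{T_{N-1}(\varphi_n q,1)}{\varphi_n}-q$ is holomorphic and vanishes to order $\ge N$ at $z=1$, one gets that $\Phi_n/\varphi_n$ is holomorphic at $z=1$; its only singularity in $\bC$ is then the simple pole of $1/\varphi_n$ at $\lambda_n$, and a degree count gives $\Phi_n/\varphi_n\to 0$ at $\infty$. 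Therefore $\Phi_n/\varphi_n=\frac{\beta}{z-\lambda_n}$ with $\beta=(1-|\lambda_n|^2)\Phi_n(\lambda_n)$, a function whose Fourier expansion on $\bT$ only has negative frequencies; hence $\langle\Phi_n,\varphi_n h\rangle_{H^2}=\langle\Phi_n/\varphi_n,h\rangle_{L^2(\bT)}=0$ for every $h\in H^2$, and in fact $\Phi_n=\varphi_n\cdot\frac{\beta}{z-\lambda_n}=\beta k_{\lambda_n}$.

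Granting this, $H_N=\varphi_n F+\Phi_n$ is an orthogonal sum and, $\varphi_n$ being inner,
\[
D_1^N(\varphi_n f)=\|F\|_{H^2}^2+\|\Phi_n\|_{H^2}^2=D_1^N(f)+(1-|\lambda_n|^2)\,|\Phi_n(\lambda_n)|^2,
\]
where the last equality uses $\|\beta k_{\lambda_n}\|_{H^2}^2=(1-|\lambda_n|^2)\,|(\beta k_{\lambda_n})(\lambda_n)|^2$. It remains to estimate $\Phi_n(\lambda_n)$: since $\varphi_n(\lambda_n)=0$ one has $|\Phi_n(\lambda_n)|=|T_{N-1}(\varphi_n q,1)(\lambda_n)|/|1-\lambda_n|^N$, and expanding $T_{N-1}(\varphi_n q,1)(\lambda_n)=\sum_{m=0}^{N-1}\sum_{i=0}^{m}\frac{\varphi_n^{(i)}(1)}{i!}\frac{q^{(m-i)}(1)}{(m-i)!}(\lambda_n-1)^m$, substituting $\varphi_n^{(i)}(1)/i!=\ol{\lambda_n}^{\,i-1}(1-|\lambda_n|^2)/(1-\ol{\lambda_n})^{i+1}$ for $i\ge 1$ (and $|\varphi_n(1)|=1$), and using $|q^{(j)}(1)/j!|=|f^{(j)}(1)/j!|\le M$, $|1-\lambda_n|\le 2$ and $\frac{1-|\lambda_n|^2}{|1-\lambda_n|}\le 2$, one bounds each summand by $C_N M$, where $M:=\max\{|f^{(j)}(1)/j!|:0\le j\le N-1\}$. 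Thus $|\Phi_n(\lambda_n)|\le C_N M/|1-\lambda_n|^N$ and $(1-|\lambda_n|^2)|\Phi_n(\lambda_n)|^2\le C_N^2 M^2\frac{1-|\lambda_n|^2}{|1-\lambda_n|^{2N}}$, which is the assertion.

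The main obstacle is the orthogonality $\Phi_n\perp\varphi_n H^2$. Without it one only gets, by the triangle inequality, $\sqrt{D_1^N(\varphi_n f)}\le\sqrt{D_1^N(f)}+\sqrt{C_N}\,M\big(\frac{1-|\lambda_n|^2}{|1-\lambda_n|^{2N}}\big)^{1/2}$; the resulting square–root weights need not be summable even when $\sum_n\frac{1-|\lambda_n|^2}{|1-\lambda_n|^{2N}}<\infty$, so they would be useless for the subsequent step $B_\Lambda\in\cH(b)$, whereas the exact Pythagorean identity produces precisely the additive form with coefficient $1$ in front of $D_1^N(f)$. The remaining ingredients — the identity for $H_N$, the derivative formula for $\varphi_n$ recorded just above, and the term–by–term estimate — are routine.
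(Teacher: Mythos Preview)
Your proof is correct, and the key orthogonality claim $\Phi_n\in\bC k_{\lambda_n}$ is established cleanly. The route, however, differs from the paper's. The paper proves by a direct (and somewhat lengthy) induction on $N$ the explicit identity
\[
H_N(z)=\sum_{j=0}^{N-1}\frac{f^{(j)}(1)}{j!}\,G_{N-j}(z)+\varphi_n(z)\,F(z),
\]
and then invokes the preceding lemma, which identifies each $G_{N-j}$ as the specific multiple $\frac{1-|\lambda_n|^2}{(1-\ol{\lambda_n})^{N-j}}\,\ol{\lambda_n}^{\,N-1-j}k_{\lambda_n}$; this gives orthogonality and an exact closed formula for $\|\Phi_n\|_{H^2}^2$ before the final crude bound. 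You instead bypass both the induction and the explicit formula for $G_{N-j}$: by writing $\Phi_n=H_N-\varphi_nF$ in one stroke as the $D_1^N$-numerator for the polynomial $q=T_{N-1}(f,1)$, and then reading off $\Phi_n/\varphi_n=\beta/(z-\lambda_n)$ from a pole/degree count for rational functions, you obtain $\Phi_n=\beta k_{\lambda_n}$ without any recursion. What the paper's approach buys is the exact expression $\|\Phi_n\|_{H^2}^2=\frac{1-|\lambda_n|^2}{|1-\lambda_n|^{2N}}\big|\sum_{j}\frac{f^{(j)}(1)}{j!}(1-\ol{\lambda_n})^j\ol{\lambda_n}^{N-1-j}\big|^2$, which makes the constant transparent; what your approach buys is brevity and a conceptual reason for the orthogonality (the one-dimensionality of $H^2\ominus\varphi_nH^2$ forces $\Phi_n$ to be a kernel once you know it is rational with the right pole structure). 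The final term-by-term estimate of $T_{N-1}(\varphi_nq,1)(\lambda_n)$ via $\frac{1-|\lambda_n|^2}{|1-\lambda_n|}\le 2$ is the same in spirit as the paper's.
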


\begin{proof}
Define
\[
U_N:= \sum_{j=0}^{N-1} \frac{f^{(j)}(1)}{j!} G_{N-j}(z) + \varphi_n (z) \frac{f(z)-T_{N-1}(f,1)(z)}{(z-1)^N}
\]
Then we have
\begin{align*}
    (z-1)U_{N+1}(z)-U_N & = (z-1) \Big( \sum_{j=0}^N \frac{f^{(j)}(1)}{j!} G_{N-j+1}(z) + \varphi_n(z) \frac{f(z)-T_{N}(f,1)(z)}{(z-1)^{N+1}}\Big)\\
    & \quad  - \Big( \sum_{j=0}^{N-1} \frac{f^{(j)}(1)}{j!} G_{N-j+1}(z) + \varphi_n(z) \frac{f(z)-T_{N-1}(f,1)(z)}{(z-1)^{N}}\Big)\\
    & = \sum_{j=0}^{N-1} \frac{f^{(j)}(1)}{j!}\Big((z-1)G_{N-j+1}(z) -G_{N-j}(z) \Big)\\
    & \quad + (z-1) \frac{f^{(N)}(1)}{N!}G_1(z) + \varphi_n(z)\frac{T_{N-1}(f,1)(z)-T_{N}(f,1)(z)}{(z-1)^N} \\
    & = -\sum_{j=0}^{N-1} \frac{f^{(j)}(1)}{j!} \frac{\varphi_n^{(N-j)}(1)}{(N-j)!} + (z-1)\frac{f^{(N)}(1)}{N!} \frac{\varphi_n(z)-\varphi_n(1)}{z-1} \\
    & \quad - \frac{\varphi_n(z)}{(z-1)^N}(z-1)^N \frac{f^{(N)}(1)}{N!}\\
    & = \sum_{j=0}^N \frac{f^{(j)}\varphi_n^{(N-j)}(1)}{j!(N-j)!} = - \frac{(f\varphi_n)^{(N)}(1)}{N!}.
\end{align*}
Clearly,
\[
(z-1)H_{N+1}(z)-H_N(z) = - \frac{(f\varphi_n)^{(N)}(1)}{N!}.
\]
Since $H_1(z)=U_1(z)$, by induction we have that $H_N(z)=U_N(z)$ for all $N$. Note that $G_N$ is a scalar multiple of the reproducing kernel $k_{\lambda_n}$ and is thus orthogonal in $H^2$ to the shift invariant subspace $\varphi_nH^2$. Note also that $\varphi_n$, as a Blaschke factor, is inner and hence an isometric multiplier on $H^2$. Therefore from Lemma \ref{D_1^N(phi_n)} we obtain
    \begin{align*}
        D_1^N (\varphi_n f) & = \|H_N\|_{H^2}^2 \\
        & =  \Big| \sum_{j=0}^{N-1} \frac{f^{(j)}(1)}{j!} \frac{1-|\lambda_n|^2}{(1-\ol{\lambda_n})^{N-j}} \ol{\lambda_n}^{N-1-j}  \Big|^2 \frac{1}{1-|\lambda_n|^2} +D_1^N (f) \\
        & = \frac{1-|\lambda_n|^2}{|1-\lambda_n|^{2N}} \Big| \sum_{j=0}^{N-1} \frac{f^{(j)}(1)}{j!} (1-\ol{\lambda_n})^j \ol{\lambda_n}^{N-1-j} \Big|^2 +D_1^N (f)  \\
        & \leq C \max \Big( \big\{ |f^{(j)}(1)/j!|^2 : j=0, \ldots, N-1 \big\} \Big) \frac{1-|\lambda_n|^2}{|1-\lambda_n|^{2N}}+D_1^N (f) .
    \end{align*}
\end{proof}

By an easy induction, the following corollary is an immediate consequence of Lemma \ref{D_1^N (phi_n f)} and Theorem \ref{derivative B_Lambda} (which relates somehow the sum \eqref{main_sum_1} to the derivatives of the Blaschke product at $\zeta$).

\begin{cor}
    Suppose that \eqref{main_sum_1} holds and consider
    \[
    B_n (z) = \prod_{k=1}^n \frac{|\lambda_k|}{\lambda_k} \varphi_k(z).
    \]
    Then there exists $C>0$ which depends on $N$ and on the sum \eqref{main_sum_1} such that
    \[
    D_1^N(B_n) \leq C \sum_{k=1}^n \frac{1-|\lambda_k|^2}{|1-\lambda_k|^{2N}}.
    \]
\end{cor}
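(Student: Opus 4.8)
The plan is to prove the bound by induction on $n$, passing from $B_n$ to $B_{n+1}=\tfrac{|\lambda_{n+1}|}{\lambda_{n+1}}\varphi_{n+1}B_n$ via Lemma \ref{D_1^N (phi_n f)}, and using Theorem \ref{derivative B_Lambda} to control, \emph{uniformly in $n$}, the quantities $B_n^{(j)}(1)$ (for $0\le j\le N-1$) that enter that lemma when $f=B_n$.

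First I would translate \eqref{main_sum_1} into the hypothesis of Theorem \ref{derivative B_Lambda}. Since $1-|\lambda_n|\le 1-|\lambda_n|^2$ and $|1-\lambda_n|^{N-1}\le 2^{N-1}$, each term of $\sum_n\frac{1-|\lambda_n|}{|1-\lambda_n|^{N+1}}$ is at most $2^{N-1}$ times the corresponding term of \eqref{main_sum_1}; hence
\[
A:=\sum_{n\in\bN}\frac{1-|\lambda_n|}{|1-\lambda_n|^{N+1}}\ \le\ 2^{N-1}\sum_{n\in\bN}\frac{1-|\lambda_n|^2}{|1-\lambda_n|^{2N}}<\infty ,
\]
and the same $A$ bounds the corresponding sum over any initial segment $\{\lambda_1,\dots,\lambda_n\}$. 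As each $B_n$ is itself a finite Blaschke product, Theorem \ref{derivative B_Lambda}(ii) (at $\zeta=1$, order $N$) applies to $B_n$ with this fixed $A$ and yields a constant $C_0=C_0(N,A)$, \emph{independent of $n$}, such that $|B_n^{(j)}(1)|\le C_0$ for all $0\le j\le N$ and all $n$ (the boundary derivative is an honest derivative here, as $B_n$ is rational with no pole at $1$). In particular $C_1:=\max_{0\le j\le N-1}\sup_n|B_n^{(j)}(1)/j!|^2\le C_0^2<\infty$, and $C_1$ depends only on $N$ and the value of the sum in \eqref{main_sum_1}.

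Now set $C:=\max(1,\,C_\ast C_1)$, where $C_\ast=C_\ast(N)$ is the constant of Lemma \ref{D_1^N (phi_n f)} (its proof shows $C_\ast$ depends only on $N$), and argue by induction. For $n=1$, Lemma \ref{D_1^N(phi_n)} gives $D_1^N(B_1)=D_1^N(\varphi_1)=|\lambda_1|^{2(N-1)}\frac{1-|\lambda_1|^2}{|1-\lambda_1|^{2N}}\le C\,\frac{1-|\lambda_1|^2}{|1-\lambda_1|^{2N}}$, using $|\lambda_1|<1$, $C\ge 1$, and that the unimodular factor is harmless since $D_1^N(cf)=|c|^2D_1^N(f)$. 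For the inductive step, each $B_n$ lies in $\cH(b_1)$ — by Theorem \ref{decomposition}, $B_n=(z-1)^N h+T_{N-1}(B_n,1)$ with $h$ rational and pole-free on $\overline{\bD}$, hence $h\in H^2$ — so Lemma \ref{D_1^N (phi_n f)} applies with $f=B_n$ and the Blaschke factor $\varphi_{n+1}$; together with the uniform bound above this gives
\[
D_1^N(B_{n+1})=D_1^N(\varphi_{n+1}B_n)\le D_1^N(B_n)+C_\ast C_1\,\frac{1-|\lambda_{n+1}|^2}{|1-\lambda_{n+1}|^{2N}}\le C\sum_{k=1}^{n+1}\frac{1-|\lambda_k|^2}{|1-\lambda_k|^{2N}},
\]
using the induction hypothesis and $C_\ast C_1\le C$, which closes the induction.

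The one genuinely substantive point is the uniform-in-$n$ estimate $|B_n^{(j)}(1)|\le C_0(N,A)$ for $j\le N-1$: this is exactly where summability \eqref{main_sum_1} (rather than mere convergence of a tail) is used, and where Theorem \ref{derivative B_Lambda} does the work. The remaining ingredients — the reduction of \eqref{main_sum_1} to the $\sum\frac{1-|\lambda_n|}{|1-\lambda_n|^{N+1}}$ form, the membership $B_n\in\cH(b_1)$, and the telescoping induction — are routine; the only thing to be careful about is to fix $A$ once and for all as an upper bound for the full series, so that the constant supplied by Theorem \ref{derivative B_Lambda} does not depend on which initial segment is considered.
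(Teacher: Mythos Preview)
Your proof is correct and follows exactly the approach the paper indicates: induction on $n$ via Lemma \ref{D_1^N (phi_n f)} with $f=B_n$, controlling the derivative terms $|B_n^{(j)}(1)|$ uniformly in $n$ through Theorem \ref{derivative B_Lambda} applied with a single $A$ bounding the full series from \eqref{main_sum_1}. You supply all the details the paper leaves implicit (the reduction of \eqref{main_sum_1} to the exponent $N+1$ condition, membership $B_n\in\cH(b_1)$, the dependence of $C_\ast$ on $N$ only), and nothing is amiss.
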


\begin{lem} \label{B_Lambda in H(b)}
    If \eqref{main_sum_1} holds, then $B_\Lambda \in \cH(b_1)$ and $|B_\Lambda(1)|=1$.
\end{lem}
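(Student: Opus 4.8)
The plan is to realize $B_\Lambda$ as the limit of its partial Blaschke products $B_n$, to transport to $B_\Lambda$ the uniform bound on $D_1^N(B_n)$ supplied by the corollary just above, and then to read off membership in $\cH(b_1)$ from the decomposition, which for the pair $(a_1,b_1)$ takes the form $\cH(b_1)=(z-1)^N H^2\oplus P_{N-1}$ by Theorem \ref{decomposition}. First I would unpack the hypothesis. Since $|1-\lambda_n|\le 2$, condition \eqref{main_sum_1} implies both $\sum_n(1-|\lambda_n|)<\infty$ and $\sum_n\frac{1-|\lambda_n|}{|1-\lambda_n|^{N}}<\infty$. The first makes $\Lambda$ a Blaschke sequence, so that $B_\Lambda$ is a well-defined inner function with $\|B_\Lambda\|_{H^2}\le 1$ and $B_n\to B_\Lambda$ locally uniformly on $\bD$. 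The second allows Theorem \ref{derivative B_Lambda} to be applied at $\zeta=1$ with the integer $N-1$: the boundary derivatives $B_\Lambda^{(j)}(1)$ exist for $0\le j\le N-1$, and — this is the one genuinely non-formal ingredient — the remark following that theorem gives $B_n^{(j)}(1)\to B_\Lambda^{(j)}(1)$ for those $j$. In particular $T_{N-1}(B_\Lambda,1)$ is meaningful and $T_{N-1}(B_n,1)\to T_{N-1}(B_\Lambda,1)$ coefficient by coefficient.

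Next I would transport the estimate. Set $G_N^{(n)}(z)=\dfrac{B_n(z)-T_{N-1}(B_n,1)(z)}{(z-1)^N}$ and $G_N^{(\infty)}(z)=\dfrac{B_\Lambda(z)-T_{N-1}(B_\Lambda,1)(z)}{(z-1)^N}$; both are holomorphic on $\bD$, and $D_1^N(B_n)=\|G_N^{(n)}\|_{H^2}^2$ exactly as in the computation of $D_1^N(\varphi_n)$. From the convergences recorded above, together with the fact that $(z-1)^N$ is bounded away from $0$ on compact subsets of $\bD$, one gets $G_N^{(n)}\to G_N^{(\infty)}$ locally uniformly on $\bD$. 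On the other hand the corollary above yields $D_1^N(B_n)\le C\sum_{k=1}^n\frac{1-|\lambda_k|^2}{|1-\lambda_k|^{2N}}\le CS$, with $S:=\sum_k\frac{1-|\lambda_k|^2}{|1-\lambda_k|^{2N}}<\infty$ by \eqref{main_sum_1}, so $\sup_n\|G_N^{(n)}\|_{H^2}^2\le CS$. Since a locally uniform limit of $H^2$ functions with uniformly bounded norms again lies in $H^2$, with norm no larger than the supremum (integrate $|G_N^{(n)}|^2$ on $|z|=\rho$, pass to the limit in $n$, then let $\rho\to1$), we conclude $G_N^{(\infty)}\in H^2$ with $\|G_N^{(\infty)}\|_{H^2}^2\le CS$; identifying boundary values through the identity $G_N^{(\infty)}(z)(z-1)^N=B_\Lambda(z)-T_{N-1}(B_\Lambda,1)(z)$ (valid on $\bD$, hence a.e.\ on $\bT$), this is precisely $D_1^N(B_\Lambda)\le CS<\infty$.

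Finally, from $B_\Lambda=T_{N-1}(B_\Lambda,1)+(z-1)^N G_N^{(\infty)}$ with $T_{N-1}(B_\Lambda,1)\in P_{N-1}$ and $G_N^{(\infty)}\in H^2$, Theorem \ref{decomposition} gives $B_\Lambda\in(z-1)^NH^2\oplus P_{N-1}=\cH(b_1)$. For the normalization, each finite product $B_n$ is holomorphic across $\bT$ and unimodular there, so $|B_n(1)|=1$; letting $n\to\infty$ and using $B_n(1)\to B_\Lambda(1)$ gives $|B_\Lambda(1)|=1$. The only place where something real must be said is the convergence of the Taylor data of the $B_n$ at the \emph{boundary} point $1$: this fails under the bare Blaschke condition and is exactly what the strengthened hypothesis \eqref{main_sum_1} secures via the supplement to Theorem \ref{derivative B_Lambda}. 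Everything else is a normal-families-plus-semicontinuity argument combined with the decomposition \eqref{eq:decomposition}.
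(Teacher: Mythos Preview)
Your proof is correct and follows the same strategy as the paper: bound $D_1^N(B_n)$ uniformly via the preceding corollary, use the remark after Theorem \ref{derivative B_Lambda} for convergence of the Taylor data at $1$, and pass to the limit. The only cosmetic differences are that the paper applies Fatou's lemma to the boundary integrals (after extracting an a.e.\ convergent subsequence) instead of your lower-semicontinuity argument on circles $|z|=\rho$, and cites Frostman's theorem for $|B_\Lambda(1)|=1$ rather than deducing it from $|B_n(1)|=1$ and $B_n(1)\to B_\Lambda(1)$.
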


Observe that the above conclusion trivially holds for every subproduct $B_{\Lambda'}$ with $\Lambda'\subset \Lambda$.

\begin{proof}
It is well known that $B_n \to B_\Lambda$ in $H^2$ as $n \to \infty$. This also implies that some subsequence of $(B_n)$ converges for almost every $\zeta$ to $B_{\Lambda}$, say $(B_{n_k})$. 
Define
\[
 \psi_{k,0}(\zeta)= \frac{B_{n_k}(\zeta)-T_{N-1}(B_{n_k},1)(\zeta)}{(\zeta-1)^N},\quad \zeta\in\bT\setminus\{1\},
\]
and $\psi_k=|\psi_{k,0}|$.

By the preceding observation, $B_{n_k}(\zeta)\to B_{\Lambda}(\zeta)$ a.e. Moreover, in view of Theorem \ref{derivative B_Lambda} and the remark thereafter, the Taylor polynomial $T_{N-1}(B_{n_k},1)(\zeta)$ converges at every $\zeta$ to $T_{N-1}(B_{\Lambda},1)(\zeta)$. Hence 
\[
 \psi_k(\zeta)\to \psi(\zeta)=\Big| \frac{B_{\Lambda}(\zeta)-T_{N-1}(B_{\Lambda},1)(\zeta)}{(\zeta-1)^N} \Big|,\quad \text{ a.e. } \zeta\in\bT \text{ when }k\to+\infty.
\]

Then we can use the above corollary and Fatou's lemma to obtain
\begin{align*}
    D_1^N(B_\Lambda) & 
     = \int_{\bT} \Big| \frac{B_{\Lambda}(\zeta)-T_{N-1}(B_{\Lambda},1)(\zeta)}{(\zeta-1)^N} \Big|^2 dm(\zeta)=\int_{\bT}|\psi(\zeta)|^2dm(\zeta)
    =\int_{\bT} \lim_{k\to\infty}|\psi_k(\zeta)|^2dm(\zeta)\\
 &\le \liminf_{k\to+\infty}\int_{\bT} |\psi_k(\zeta)|^2dm(\zeta)\\
    & \leq \liminf_{k \to \infty} \int_{\bT} \Big| \frac{B_{n_k}(\zeta)-T_{N-1}(B_{n_k},1)(\zeta)}{(\zeta-1)^N} \Big|^2 dm(\zeta) \\
    & \leq \sup_k D_1^N(B_{n_k}) \\
    & \leq C \sum_{j=1}^\infty \frac{1-|\lambda_j|}{|1-\lambda_j|^{2N}},
\end{align*}
which proves the first statement. 

The second statement is already known from Frostman's theorem (see for instance \cite[Theorem 1]{Ahern71}).
\end{proof}

\begin{rem*}
The expression of $\psi_{k,0}$ in the proof of Lemma \ref{B_Lambda in H(b)} is in the spirit of a formula appearing already in \cite{Fricain08}*{p.2123} and valid in the context of the upper half plane in order to show that the corresponding function is in $\cH(b) \subset H^2$. There, the authors represent $\psi_{k,0}$ as a linear combination of derivatives of kernels. The situation does not immediately transfer to the unit disk by a simple change of variables and it would be interesting to explore this further in the setting of the unit disc to obtain a similar representation in $\bD$.
\end{rem*}

To consider the general case of several points $\zeta_j \in \bT$, we will need a result on intersections of $M(\bar a)$-spaces which seems to be interesting on its own.

\begin{prop} \label{prop: intersection M(bar a)}
Let $a_1,\ldots a_l \in H^\infty$ satisfy the following corona type condition, i.e. there exists $\delta >0$ such that
for all $1\le i,j\le n$, $i\neq j$,
\[
 |a_i(z)|+|a_j(z)|  \geq \delta \quad \forall z \in \bD.
\]
Then 
\[
\bigcap_{j=1}^l M(\ol{a_j}) = M (\prod_{j=1}^l \ol{a_j}).
\]
\end{prop}

\begin{proof}

It is enough to show the claim for $l=2$. The general case follows by induction.

Clearly (see also \cite[Theorem 16.7]{Fricain15a}), 
\begin{align*}
M(\ol{a_1 a_2}) & = T_{\ol{a_1a_2}}H^2 = T_{\ol{a_1}}(T_{\ol{a_2}}H^2)\\
& = T_{\ol{a_2}}(T_{\ol{a_1}}H^2) \subset T_{\ol{a_1}}H^2 \cap T_{\ol{a_2}}H^2\\
& = M(\ol{a_1}) \cap M(\ol{a_2}).
\end{align*}
For the reverse inclusion, note that by the corona theorem there exist $h_1, h_2 \in H^\infty$ such that
\[
a_1h_1 + a_2 h_2 \equiv 1.
\]
Let $f \in M(\ol{a_1}) \cap M(\ol{a_2})$, then there exist $g_1,g_2 \in H^2$ such that $f=T_{\ol{a_j}}g_j$, $j=1,2$, and hence
\begin{align*}
    T_{\overline{a_1a_2}}(T_{\bar h_1} g_2 + T_{\bar h_2} g_1) 
&= T_{\overline{a_1a_2} \bar h_1} g_2 + T_{\overline{a_1a_2} \bar h_2} g_1 \\
&= T_{\bar a_1 \bar h_1} T_{\bar a_2} g_2 + T_{\bar a_2 \bar h_2} T_{\bar a_1} g_1 \\
&= T_{\bar a_1 \bar h_1 + \bar a_2 \bar h_2} f \\
&= f,
\end{align*}
and hence $f=T_{\overline{a_1a_2}}g$ with $g=T_{\bar h_1} g_2 + T_{\bar h_2} g_1\in H^2$,
so that $f\in M(\ol{a_1 a_2})$.
\end{proof}

\begin{cor} \label{B_Lambda in H(b) general}
    If $\Lambda$ satisfies \eqref{main_sum}, then $B_\Lambda \in \cM(\cH(b))$.
\end{cor}

\begin{proof}
    By Lemma \ref{B_Lambda in H(b)} and a rotation argument, for all $j$ we have $D_{\zeta_j}^N(B_\Lambda) < \infty$. Hence $B_\Lambda \in \bigcap_j \mathcal{H}(b_j)=\bigcap_j M(\ol a_j)$, where $b_j$ are the functions such that $a_j(z)=(z-\zeta_j)^{m_j}/2^{m_j}$ are the corresponding Pythagorean mates. Since the functions $a_1,\ldots,a_l$ clearly satisfy the corona condition, we obtain the result from Proposition \ref{prop: intersection M(bar a)}.
\end{proof}

Before showing the multiplier interpolation result, let us make the following useful observation. It is always possible to add a finite number of points to a (multiplier-) interpolating sequence (once the equivalence of interpolation and multiplier interpolation is established, it is clear that the result is true also in $\cH(b)$). It is of course enough to show that we can add one point.

\begin{lem}\label{AddPts}
Let $b$ be a rational, non inner function with $\|b\|_{H^{\infty}}=1$. Let $\Lambda=\{\lambda_n\}_{n\ge 1}$ be $\cM(\cH(b))$-interpolating, and $\lambda_0\in\bD\setminus\Lambda$. Then $\Lambda\cup \{\lambda_0\}=\{\lambda_n\}_{n\ge 0}$ is $\cM(\cH(b))$-interpolating.
\end{lem}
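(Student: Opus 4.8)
The plan is to reduce the problem to interpolation by a multiplier that already solves the problem on $\Lambda$ and then correct it at the new point $\lambda_0$ by adding a suitable element of $\cH(b)\cap H^\infty$. First I would fix an arbitrary target sequence $(a_n)_{n\ge 0}\in\ell^\infty$. Applying $\cM(\cH(b))$-interpolation on $\Lambda=\{\lambda_n\}_{n\ge 1}$, pick $\varphi\in\cM(\cH(b))=H^\infty\cap\cH(b)$ (Theorem \ref{multiplier_algebra}) with $\varphi(\lambda_n)=a_n$ for all $n\ge 1$. Then $\psi:=\varphi+c\, g$ will be the interpolating function, where $g$ is a fixed multiplier vanishing on all of $\Lambda$ but not at $\lambda_0$, and $c=(a_0-\varphi(\lambda_0))/g(\lambda_0)$ is the scalar chosen to fix the value at $\lambda_0$. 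Since $\cM(\cH(b))$ is a vector space (even if not an algebra, it is closed under addition and under multiplication by $H^\infty$ functions, cf. the discussion after Theorem \ref{multiplier_algebra}), $\psi\in\cM(\cH(b))$, and $\psi(\lambda_n)=a_n$ for $n\ge 1$ while $\psi(\lambda_0)=\varphi(\lambda_0)+c\,g(\lambda_0)=a_0$.

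The one nontrivial point is producing the function $g$: we need $g\in\cM(\cH(b))=H^\infty\cap\cH(b)$ with $g(\lambda_n)=0$ for every $n\ge 1$ and $g(\lambda_0)\ne 0$. The natural candidate is $g=B_\Lambda$, the Blaschke product of $\Lambda$; indeed $\Lambda$ is a Carleson sequence (Lemma \ref{H(b) implies Carleson}, since $\cM(\cH(b))$-interpolating implies $\cH(b)$-interpolating), hence a fortiori a Blaschke sequence, so $B_\Lambda\in H^\infty$ with $|B_\Lambda|\le 1$ and $B_\Lambda(\lambda_0)\ne 0$ because $\lambda_0\notin\Lambda$. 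It remains to check $B_\Lambda\in\cH(b)$. By the decomposition \eqref{eq:decomposition} this amounts to verifying that $B_\Lambda$ has enough vanishing/smoothness at the boundary zeros $\zeta_1,\dots,\zeta_l$ of $a$; more precisely, reducing via the change of variables to the model case $a_1(z)=(z-1)^N/2^N$, it suffices to know \eqref{main_sum_1} holds for $\Lambda$, which is exactly condition \eqref{main_sum} — and this is guaranteed because $\Lambda$ is $\cM(\cH(b))$-interpolating, hence $\cH(b)$-interpolating, hence satisfies \eqref{main_sum} by Lemma \ref{interp_implies_sum}. Then Lemma \ref{B_Lambda in H(b)} (in each local coordinate at the $\zeta_j$) gives $B_\Lambda\in\cH(b)$, so $g=B_\Lambda\in\cM(\cH(b))$ as required.

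I expect the main obstacle to be purely bookkeeping: Lemma \ref{B_Lambda in H(b)} is stated only in the model situation $a=a_1$, so to apply it for a general rational pair one must either invoke the general version of $B_\Lambda\in\cH(b)$ that the surrounding argument is building toward, or localize at each boundary zero $\zeta_j$ — that is, write $\cH(b)$ via \eqref{eq:decomposition} as $\big(\prod_j(z-\zeta_j)^{m_j}\big)H^2\oplus P_{N-1}$ and check that $B_\Lambda$, after subtracting its degree-$(N-1)$ Taylor data at the $\zeta_j$'s, lands in the Blaschke-type quotient, using that \eqref{main_sum} supplies the needed control via Theorem \ref{derivative B_Lambda}. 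Once one is comfortable that $B_\Lambda\in\cM(\cH(b))$ whenever $\Lambda$ is $\cM(\cH(b))$-interpolating, the rest of the argument is the one-line linear-algebra correction above. An alternative, cleaner route that avoids re-proving $B_\Lambda\in\cH(b)$ here: since by Theorem \ref{main_thm} (the equivalences $(i)\Leftrightarrow(ii)\Leftrightarrow(iii)$, whose proof is essentially complete at this stage) the condition characterizing $\cM(\cH(b))$-interpolation is the Carleson condition plus \eqref{main_sum}, and both conditions are manifestly stable under adding a single point $\lambda_0\notin\Lambda$ (the Carleson product over $\Lambda\cup\{\lambda_0\}$ differs from that over $\Lambda$ by the nonzero factors $\rho(\lambda_0,\lambda_n)$ and $\prod_{n\ge1}\rho(\lambda_0,\lambda_n)$, the latter positive by the Carleson/Blaschke condition, and the single extra summand in \eqref{main_sum} is finite since $\lambda_0\in\bD$), the conclusion follows immediately. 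I would present the direct correction argument as the main proof and mention the stability-of-conditions argument as a remark.
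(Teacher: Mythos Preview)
Your main argument is correct and is exactly the paper's: interpolate on $\Lambda$ by some $f_0\in\cM(\cH(b))$, then correct at $\lambda_0$ by a scalar multiple of $B_\Lambda$, the key input being that $\cM(\cH(b))$-interpolation $\Rightarrow$ \eqref{main_sum} $\Rightarrow B_\Lambda\in\cH(b)\cap H^\infty=\cM(\cH(b))$ via Lemmas \ref{interp_implies_sum} and \ref{B_Lambda in H(b)}. Two small remarks. First, the paper actually uses $(z-1)^N B_\Lambda$ rather than $B_\Lambda$ as the correcting function; this is irrelevant for the lemma as stated, but it guarantees that the corrected interpolant stays in $aH^2\cap H^\infty$ (no polynomial part), a feature that is needed in the very next lemma. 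Second, your proposed ``alternative, cleaner route'' via stability of condition (iii) under adding a point is circular in the paper's logical flow: Lemma \ref{AddPts} is invoked inside the proof of Lemma \ref{sum&C_implies_mult_interp}, which is precisely the implication $(iii)\Rightarrow(i)$ of Theorem \ref{main_thm}, so you cannot appeal to that equivalence here.
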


\begin{proof}
Let $(v_n)_{n\ge 0}\in \ell^{\infty}$. Since $\Lambda$ is interpolating there is $F\in \cM(\cH(b))$ such that $F(\lambda_n)=v_n$, $n\ge 1$. Setting now
\begin{equation}\label{FormulaAddPts}
 f=F+\frac{a B_{\Lambda}}{a(\lambda_0) B_{\Lambda}(\lambda_0)}(v_0-f_0(\lambda_0))
\end{equation}
we obtain obviously a function $f\in \cM(\cH(b))$ with $f(\lambda_n)=v_n$, $n\ge 0$. 
\end{proof}


\begin{lem} \label{sum&C_implies_mult_interp}
    Let $b$ be a rational, non inner function with $\|b\|_{H^{\infty}}=1$. If $\Lambda$ satisfies \eqref{main_sum} and \eqref{Carleson_condition}, then $\Lambda$ is interpolating for $\cM (\cH (b))$. Moreover, the interpolating function can be chosen in $aH^2 \cap H^\infty$.
\end{lem}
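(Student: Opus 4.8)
The plan is to produce, for a bounded sequence $(v_n)_{n\in\bN}$, an interpolating function of the form $\varphi = \sum_n v_n \varphi_n$ where each $\varphi_n$ is a suitable "atom" that (i) equals $1$ at $\lambda_n$ and $0$ at $\lambda_k$ for $k\neq n$, (ii) lies in $aH^2\cap H^\infty = (z-1)^N H^2\cap H^\infty$, and (iii) has $\cH(b_1)$-norms (equivalently $H^\infty$-norm plus $D_1^N$-energy) that are summable in a way that makes the series converge in $\cH(b_1)$ with control independent of $(v_n)$. The natural candidate, borrowing from the Carleson interpolation machinery for $H^2$, is
\[
\varphi_n(z) = \Big(\frac{B_\Lambda(z)}{B_n'(\lambda_n)(z-\lambda_n)}\Big)\,\frac{(z-1)^N}{(\lambda_n-1)^N}\,c_n(z),
\]
where $B_n$ is the Blaschke product with the $n$-th factor removed, and $c_n$ is a bounded correcting factor (a power of a reproducing-kernel quotient) inserted so that the extra polynomial factor $(z-1)^N/(\lambda_n-1)^N$ does not destroy the uniform boundedness — one checks $|(z-1)^N/(\lambda_n-1)^N|$ is comparable to $1$ when $z$ is pseudohyperbolically close to $\lambda_n$ and is otherwise tamed by the Carleson product. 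Since $B_\Lambda\in \cH(b_1)$ by Lemma \ref{B_Lambda in H(b)} and $(z-1)^N$ divides $B_\Lambda/( \text{its first factor})$ only after multiplying, one arranges the membership in $(z-1)^N H^2$ by keeping an explicit $(z-1)^N$ factor: concretely write $\varphi_n(z)=(z-1)^N g_n(z)$ with $g_n\in H^\infty$, which automatically places $\varphi_n$ in $aH^2\cap H^\infty\subset\cH(b_1)$ by Theorem \ref{decomposition} and Theorem \ref{multiplier_algebra}.

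The key estimates are then: first, the $H^\infty$ bound $\|\sum_n v_n\varphi_n\|_\infty \lesssim \|(v_n)\|_\infty$, which is exactly the classical Carleson interpolation estimate (Lemma \ref{H(b) implies Carleson} / Shapiro–Shields) once one absorbs the polynomial factors using that $\sum_n (1-|\lambda_n|^2)/|1-\lambda_n|^{2N}<\infty$ forces $\lambda_n\to 1$ at a controlled rate, so the $(z-1)^N$ factors are uniformly comparable to the corresponding values. Second, the Dirichlet-type energy bound: using Lemma \ref{D_1^N (phi_n f)} with $f=B_\Lambda/(\text{the }n\text{-th Blaschke factor and its correction})$ and the Corollary, one gets $D_1^N(\varphi_n)\lesssim (1-|\lambda_n|^2)/|1-\lambda_n|^{2N}$, and more importantly $D_1^N(\sum_{n\in F} v_n\varphi_n)\lesssim \|(v_n)\|_\infty^2 \sum_{n\in F}(1-|\lambda_n|^2)/|1-\lambda_n|^{2N}$ for finite $F$, by expanding the $H^2$-norm of the corresponding $H_N$-function and exploiting the near-orthogonality of the reproducing-kernel pieces $G_{N-j}$ at different $\lambda_n$ (here the Carleson condition is what gives the Gram matrix of the normalized kernels a bounded inverse, turning cross terms into a convergent bound). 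Combining the two gives $\varphi=\sum_n v_n\varphi_n\in \cH(b_1)=M(\overline{a_1})$ with $\|\varphi\|\lesssim \|(v_n)\|_\infty$, and $\varphi(\lambda_k)=v_k$ by construction; since $\varphi\in H^\infty\cap\cH(b_1)=\cM(\cH(b_1))$ by Theorem \ref{multiplier_algebra}, this is the desired multiplier interpolation, and the explicit $(z-1)^N$ factor gives the "moreover" clause.

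The main obstacle I expect is the second estimate — controlling $D_1^N$ of the \emph{sum} uniformly in $(v_n)$ rather than just of each term. Unlike the $H^2$ case, where orthogonality of the atoms is essentially built in, here the $D_1^N$ seminorm sees the Taylor-polynomial correction at the single point $\zeta=1$, and the decomposition of $H_N$ in Lemma \ref{D_1^N (phi_n f)} produces a \emph{sum} of reproducing kernels $G_{N-j}$ with coefficients involving the derivatives $(\varphi\cdot(\text{tail}))^{(j)}(1)$; bounding the resulting quadratic form requires showing that the relevant infinite matrix (indexed by $n$ and by $j=0,\dots,N-1$) is bounded on $\ell^2$, which is where one must invoke the Carleson condition together with Theorem \ref{derivative B_Lambda} to control the growth of the derivatives of the partial products at $\zeta=1$. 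A clean way to organize this is to first handle a finite sub-sum, get a bound depending only on the Carleson constant and $\sum (1-|\lambda_n|^2)/|1-\lambda_n|^{2N}$, and then pass to the limit with Fatou's lemma exactly as in the proof of Lemma \ref{B_Lambda in H(b)}; finally transfer the result from $b_1$ to a general rational non-extreme $b$ by the decomposition \eqref{eq:decomposition}, treating each boundary zero $\zeta_j$ of $a$ separately and summing.
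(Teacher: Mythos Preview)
Your approach is fundamentally different from the paper's, and the obstacle you yourself flag is real and not resolved. You try to build explicit P.~Jones--type atoms $\varphi_n\in (z-1)^N H^2\cap H^\infty$ with $\varphi_n(\lambda_k)=\delta_{nk}$ and sum. But controlling $D_1^N\big(\sum_n v_n\varphi_n\big)$ uniformly in $(v_n)$ requires exactly the quadratic-form bound you mention, and the ``near-orthogonality'' of the kernel pieces $G_{N-j}$ at different $\lambda_n$ does not follow directly from the Carleson condition: the matrix you must bound mixes the derivative index $j$ with the point index $n$, and neither Theorem~\ref{derivative B_Lambda} nor Lemma~\ref{D_1^N (phi_n f)} gives that off-diagonal control. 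Even the $H^\infty$ bound is not settled: the factor $(z-1)^N/(\lambda_n-1)^N$ blows up when $\lambda_n\to 1$ and $z$ stays away, and you have only asserted, not shown, that an unspecified correcting factor $c_n$ absorbs this uniformly.

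The paper avoids all of this with a much shorter device that needs no $D_1^N$ bookkeeping at all. Since $B_\Lambda\in\cH(b_1)$, write $B_\Lambda=(z-1)^N g+p_N$ with $g\in H^2$ and $p_N\in P_{N-1}$; Lemma~\ref{B_Lambda in H(b)} gives $p_N(1)=B_\Lambda(1)\neq 0$. The point is that $B_\Lambda-p_N=(z-1)^N g$ lies in $(z-1)^N H^2\cap H^\infty$ (it is bounded because $B_\Lambda$ and $p_N$ are), so for \emph{any} $f_1\in H^\infty$ the product $(B_\Lambda-p_N)f_1$ is automatically in $aH^2\cap H^\infty\subset\cM(\cH(b_1))$. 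Since $B_\Lambda(\lambda_n)=0$, one has $(B_\Lambda-p_N)(\lambda_n)=-p_N(\lambda_n)$; thus whenever $|p_N(\lambda_n)|$ is bounded below one may invoke Carleson's $H^\infty$ interpolation theorem as a black box to pick $f_1\in H^\infty$ with $f_1(\lambda_n)=-v_n/p_N(\lambda_n)$, and $(B_\Lambda-p_N)f_1$ interpolates. The finitely many boundary zeros of $p_N$ (all away from $1$, since $p_N(1)\neq 0$) force a small splitting of $\Lambda$: points near those zeros are far from $1$, so there one interpolates $v_n/(\lambda_n-1)^N$ directly in $H^\infty$ and multiplies by $(z-1)^N$; Lemma~\ref{AddPts} disposes of any remaining finite set. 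This uses only classical $H^\infty$ interpolation plus the single structural observation about $B_\Lambda-p_N$, and delivers the interpolant in $aH^2\cap H^\infty$ by construction.
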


\begin{proof}
Let $(v_n)_{n \in \bN} \in \ell^\infty$. We have to construct a function $F \in \cM(\cH(b))$ such that $f(v_n)=a_n$ for every $n \in \bN$.

    Observe first that from Corollary \ref{B_Lambda in H(b) general}, we know that $B_\Lambda \in \cH(b)$. Hence
    \begin{equation}\label{FormB}
    B_\Lambda(z) = \prod_{j=1}^l(z-\zeta_j)^{m_j} g(z) + p_N(z)=a(z)g(z) + p_N(z),
    \end{equation}
    where $g \in H^2$. Note in particular that $|p_N(\zeta_j)| = |B_\Lambda (\zeta_j)| = 1$, $j=1,\ldots,l$.

Let $\cZ(p_N)$ be the zero set of $p_N$ and set $\{\eta_r\}_{r=1}^{R}=\cZ(p_N)\cap\bT$. Observe that $0\le R\le N-1$ since $p_N$ is not the zero polynomial. In view of $p_N(\zeta_j)\neq 0$, we also have $|\eta_r-\zeta_j|\ge \delta$ for some $\delta>0$ and every $r$ and $j$. Hence the set
\[
 \Lambda_1:=\{\lambda_{n,1}\}:=\bigcup_{r=1}^R D(\eta_r,\delta/2)\cap \Lambda
\]
is far from $\zeta_j$, $j=1,\ldots, l$.  Since $\Lambda_1$ satisfies \eqref{Carleson_condition} and $|\lambda_{n,1}-\zeta_j|\ge \delta/2$ for every $n$ and $j$, there exists $h_1 \in H^\infty$ such that 
    \[
    h_1(\lambda_{n,1})=\frac{v_{n,1}}{a(\lambda_{n,1})}.
    \]
    Then define $h(z)=a(z) h_1(z) \in \cH(b) \cap H^\infty$ which solves the interpolation problem $h(\lambda_{n,1})=v_{n,1}$.

The remaining sequence $\Lambda\setminus\Lambda_1$ might contain zeros of $p_N$, so let us first consider $\Lambda_2=\Lambda\setminus (\Lambda_1\cup \cZ(p_N))$ the elements of which we will denote by $\lambda_{n,2}$. Then for every $n\in\bN$, 
we have $|p_N(\lambda_{n,2})|\ge \varepsilon$ for some $\varepsilon>0$ ($\Lambda_2$ does not contain nor accumulate to any zero of $p_N$). Note also that $|B_{\Lambda_1}(\lambda_{n,2})|\ge |B_{\Lambda\setminus\lambda_{n,2}}(\lambda_{n,2})|\ge \eta>0$ by the Carleson condition. Hence the sequence
    \[
    u_n:=-\frac{v_{n,2} - h(\lambda_{n,2})}{B_{\Lambda_1}(\lambda_{n,2}) p_N(\lambda_{n,2})}
    \]
    is uniformly bounded, so that there exists $f_1 \in H^\infty$ with $f_1(\lambda_{n,2}) = u_n$. Then, in view of \eqref{FormB}, we have
    \[
    f(z)=(B_\Lambda(z) - p_N(z))f_1(z) \in aH^2\cap H^{\infty}\subset \cH (b) \cap H^\infty,
    \]
    and by construction
    \[
    f(\lambda_{n,2}) = \frac{v_{n,2} - h(\lambda_{n,2})}{B_{\Lambda_1}(\lambda_{n,2})}.
    \]
    We can now define $F=B_{\Lambda_1}f + h$. Since $f,h\in aH^2\cap H^{\infty}$ and $B_{\Lambda_1}\in H^{\infty}$, we get $F\in aH^2\cap H^{\infty}\subset \cH(b) \cap H^\infty$, and by construction $F(\lambda_n)=v_n$ for $\lambda_n\in \Lambda_3:=\Lambda_1\cup\Lambda_2$.

As a result, $\Lambda_3$ is $\cM(\cH(b))$-interpolating, and the interpolating function $F$ is in $aH^2\cap H^{\infty}$.

Finally, from Lemma \ref{AddPts}, since $\Lambda_0:=\cZ(p_N)\cap\Lambda$ is finite, it follows that $\Lambda=\Lambda_3\cup \Lambda_0$ is $\cM(\cH(b))$-interpolating. And moreover, since $F\in aH^2\cap H^{\infty}$, formula \eqref{FormulaAddPts} shows that the corresponding interpolating function on $\Lambda$ is also in $aH^2\cap H^{\infty}$.
\end{proof}

\section{Random interpolation: proof of Theorem \ref{main_random}}

\begin{lem} \label{random lemma}
    Let $\Lambda$ be a random sequence, $\zeta \in \bT$ and $M \in \bN$. Then
    \[
    \bP\Big( \sum_{n \in \bN} \frac{1-|\lambda_n|^2}{|\zeta-\lambda_n|^{2M}} < \infty \Big) = 
    \begin{dcases}
        1 & \text{if }\sum_{n \in \bN} (1-|\lambda_n|^2)^{\frac{1}{2M}} < \infty \\
        0 & \text{otherwise}
    \end{dcases}
    \]
\end{lem}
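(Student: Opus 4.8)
The plan is to apply Kolmogorov's three series theorem (Theorem \ref{kolmogorov 3-series}) to the independent nonnegative random variables
\[
 X_n:=\frac{1-|\lambda_n|^2}{|\zeta-\lambda_n|^{2M}},
\]
and to match the convergence of the associated series with the condition $\sum_k N_k 2^{-k/(2M)}<\infty$. First I would reduce to $\zeta=1$: writing $\zeta=e^{i\tau}$ we have $|\zeta-\lambda_n|=|1-\overline{\zeta}\lambda_n|$, and $(\overline{\zeta}\lambda_n)_n=(r_ne^{i(\vartheta_n-\tau)})_n$ has the same law as $\Lambda$, so it is enough to treat $X_n=\frac{1-r_n^2}{|1-r_ne^{i\vartheta_n}|^{2M}}$. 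From the identity $|1-re^{i\vartheta}|^2=(1-r)^2+4r\sin^2(\vartheta/2)$ one gets the two-sided bound $|1-r_ne^{i\vartheta_n}|\asymp(1-r_n)+|\vartheta_n|$ (with $\vartheta_n$ seen in $[-\pi,\pi]$), and hence, using that $\vartheta_n$ is uniform, with absolute implied constants
\[
 \bP\big(|1-\lambda_n|<u\big)\asymp\min(u,1)\ \text{ whenever }u\ge 2(1-r_n),\qquad \bP\big(|1-\lambda_n|<u\big)=0\ \text{ if }u\le 1-r_n.
\]

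The core step is the moment estimate. Put $\epsilon_n=1-r_n^2\asymp 1-|\lambda_n|$. Since $X_n>t\iff|1-\lambda_n|<(\epsilon_n/t)^{1/(2M)}$, the previous display gives, for $\epsilon_n$ small enough (equivalently, for $\lambda_n$ lying in an annulus $A_k$ with $k$ beyond a fixed absolute $k_0$, where $(\epsilon_n/t)^{1/(2M)}\ge 2(1-r_n)$ for all $t\le 1$),
\[
 \bE[\min(X_n,1)]=\int_0^1\bP(X_n>t)\,dt\asymp \epsilon_n+\epsilon_n^{1/(2M)}\!\int_{\epsilon_n}^1 t^{-1/(2M)}\,dt\asymp \epsilon_n^{1/(2M)}\asymp (1-|\lambda_n|)^{1/(2M)},
\]
where we used $1/(2M)<1$. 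Summing over $n$ and grouping by the dyadic annuli $A_k$ (on which $1-|\lambda_n|\asymp 2^{-k}$) yields
\[
 \sum_n\bE[\min(X_n,1)]\asymp \sum_k N_k 2^{-k/(2M)},
\]
with two caveats handled separately: the finitely many indices with $k<k_0$ contribute terms $\le 1$ each, and they are finitely many once $\sum_k N_k2^{-k/(2M)}<\infty$; and in the divergent case the subcase $N_{k}=\infty$ for some $k$ is trivial, since then $X_n\ge c_k>0$ for infinitely many $n$ (as $|\zeta-\lambda_n|\le 2$), whence $\sum_n X_n=\infty$ surely.

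Finally I would feed this into Theorem \ref{kolmogorov 3-series} with truncation level $c=1$. For the nonnegative $X_n$ and $Y_n=X_n\chi_{\{X_n\le 1\}}$ one has $\bE[\min(X_n,1)]=\bE[Y_n]+\bP(X_n>1)$, and $\bV[Y_n]\le\bE[Y_n^2]\le\bE[Y_n]$ because $0\le Y_n\le 1$; hence the three series all converge if and only if $\sum_n\bE[\min(X_n,1)]<\infty$, which by Theorem \ref{kolmogorov 3-series} is exactly the almost sure convergence of $\sum_n X_n$. As $\{\sum_n X_n<\infty\}$ is a tail event for the independent family $(\vartheta_n)$, Kolmogorov's $0$–$1$ law then upgrades ``not a.s. convergent'' to ``a.s. divergent'', giving the stated dichotomy: the probability equals $1$ when $\sum_k N_k2^{-k/(2M)}<\infty$ and $0$ otherwise.

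The main obstacle is the moment estimate $\bE[\min(X_n,1)]\asymp(1-|\lambda_n|)^{1/(2M)}$ with constants uniform in $n$ — in particular pinning down the exact range of $u$ for which $\bP(|1-\lambda_n|<u)\asymp\min(u,1)$ — together with the somewhat fiddly bookkeeping of passing between $\sum_n(1-|\lambda_n|)^{1/(2M)}$ and $\sum_k N_k2^{-k/(2M)}$ and disposing of the low-index annuli.
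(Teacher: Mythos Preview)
Your argument is correct and follows the same overall strategy as the paper's proof: reduce to $\zeta=1$ by rotation invariance, apply Kolmogorov's three-series theorem at level $c=1$, and show that the controlling quantities are $\asymp(1-|\lambda_n|)^{1/(2M)}$, which regroups as $\sum_k N_k 2^{-k/(2M)}$. The only genuine difference is in the computation of the moments. The paper estimates $\bP(X_n>1)$ and $\bE[Y_n]$ separately, the latter by a direct trigonometric integral split at $\vartheta=\pi/2$ and evaluated via the substitution $t=\sin(\vartheta/2)$; you instead bundle them into the single quantity $\bE[\min(X_n,1)]=\bE[Y_n]+\bP(X_n>1)$ and evaluate it through the layer-cake identity $\bE[\min(X_n,1)]=\int_0^1\bP(X_n>t)\,dt$ together with the tail bound $\bP(|1-\lambda_n|<u)\asymp u$. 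This is a slightly cleaner route to the same asymptotic and makes it transparent that one estimate controls all three series at once (the paper uses the same $\bV[Y_n]\le\bE[Y_n]$ bound you do). In the divergent case the paper concludes directly from the failure of series~(i) (via Borel--Cantelli), whereas you appeal to the $0$--$1$ law; both are fine.
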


\begin{proof}
    Since the random sequences we are considering are invariant by rotations, we can assume without loss of generality that $\zeta=1$.
    
    Let us introduce
    \[
    X_n= \frac{1-|\lambda_n|^2}{|1-\lambda_n|^{2M}}.
    \]
    By construction, $X_n$ are independent. We start noting that
    \begin{align*}
    \bP(X_n > 1) & = \bP \Big( (1-|\lambda_n|^2)^{1/M} > 1+ |\lambda_n|^2 -2|\lambda_n|\cos(\vartheta_n) \Big) \\
    & = \bP \Big( \cos(\vartheta_n) > \frac{1+|\lambda_n|^2 - (1-|\lambda_n|^2)^{1/M}}{2|\lambda_n|} \Big).
    \end{align*}
    Define
    \[
    u_n := \frac{1+|\lambda_n|^2 - (1-|\lambda_n|^2)^{1/M}}{2|\lambda_n|},
    \]
    then we obtain
    \begin{align*}
        \bP(X_n > 1) & = \bP(0 < \vartheta_n < \arccos (u_n)) + \bP(2\pi - \arccos(u_n) < \vartheta_n < 2\pi ) \\
        & = \frac{\arccos(u_n)}{\pi}.
    \end{align*}
    For $n \to \infty$ we have that $u_n \to 1$ which implies $\arccos (u_n) \sim (1-|u_n|^2)^{1/2}$. A computation yields 
    \[
    1-|u_n|^2 \sim (1-|\lambda_n|^2)^{1/M},
    \]
    which finally implies
    \[
    \bP(X_n > 1) \sim \frac{(1-|\lambda_n|^2)^{\frac{1}{2M}}}{\pi}.
    \]
    We obtain that if $\sum_n (1-|\lambda_n|^2)^{\frac{1}{2M}}=\infty$ then $\sum_{n\in\bN}\bP(X_n>1)=+\infty$. By Theorem \ref{kolmogorov 3-series} (or directly by the Borel-Cantelli lemma), we conclude that $\bP(\sum_n X_n < \infty)=0$, which yields the second part of the claim.
\\

Let us now consider the case when $ \sum_n (1-|\lambda_n|^2)^{\frac{1}{2M}}<\infty$. The sum condition translates immediately to $\sum_{n\in\bN}\bP(X_n>1)<+\infty$, which already yields condition (i) of Theorem \ref{kolmogorov 3-series}.
In order to check the conditions (ii) and (iii), let us introduce
    \[
   Y_n = \frac{1-|\lambda_n|^2}{|1-\lambda_n|^{2M}} \chi_{\{X_n \leq 1\}}.
\]

    We proceed computing $\bE[Y_n]$. Since $\vartheta\mapsto |1-|\lambda_n|e^{i\vartheta}|$ is even, we get
    \begin{align}\label{Cond(ii)}
        \bE[Y_n] & = \frac{1}{2\pi}\int_{\arccos(u_n)}^{2\pi-\arccos(u_n)} \frac{1-|\lambda_n|^2}{|1-|\lambda_n|e^{i\vartheta}|^{2M}} d\vartheta 
         = \frac{1}{\pi} \int_{\arccos(u_n)}^{\pi} \frac{1-|\lambda_n|^2}{|1-|\lambda_n|e^{i\vartheta}|^{2M}} d\vartheta \nonumber\\
& = \frac{1}{\pi} \int_{\arccos(u_n)}^{\pi/2} \frac{1-|\lambda_n|^2}{|1-|\lambda_n|e^{i\vartheta}|^{2M}} d\vartheta 
 + \frac{1}{\pi}\int_{\pi/2}^{\pi} \frac{1-|\lambda_n|^2}{|1-|\lambda_n|e^{i\vartheta}|^{2M}} d\vartheta.
    \end{align}

Let us discuss the first integral. 
Observe that $2|1-re^{i\vartheta}|^2\ge |1-e^{i\vartheta}|^2=2(1-\cos(\vartheta))$ for $r\in (0,1)$ and $\vartheta\in [0,\pi/2]$. Hence
    \begin{align*}
         \frac{1}{\pi} \int_{\arccos(u_n)}^{\pi/2} \frac{1-|\lambda_n|^2}{|1-|\lambda_n|e^{i\vartheta}|^{2M}} d\vartheta & \le \frac{1}{\pi} \int_{\arccos(u_n)}^{\pi/2} \frac{1-|\lambda_n|^2}{(1-\cos(\vartheta))^{M}} d\vartheta \\
         & = \frac{2(1-|\lambda_n|^2)}{2^M\pi} \int_{c_n}^{1/2} \frac{1}{t^{2M}} \frac{dt}{\sqrt{1-t^2}},
    \end{align*}
    where in the last equality we have used the change of variable $t=\sin(\vartheta/2) $ and $c_n=\sin(\frac{\arccos(u_n)}{2})$. If $t \in [c_n,1/2]$, then the function $t \mapsto \sqrt{1-t^2}$ is bounded from above and below. This yields
    \begin{align*}
        \frac{2(1-|\lambda_n|^2)}{2^M\pi} \int_{c_n}^{1/2} \frac{1}{t^{2M}} \frac{dt}{\sqrt{1-t^2}} & \simeq \frac{2(1-|\lambda_n|^2)}{2^M \pi}
        \int_{c_n}^{1/2} \frac{1}{t^{2M}} dt \\
        & = \frac{2(1-|\lambda_n|^2)}{2^M \pi(1-2M)} (2^{2M-1} - \frac{1}{c_n^{2M-1}}).
    \end{align*}
    Since, as already shown, $\arccos(u_n) \sim (1-|\lambda_n|^2)^{1/2}$ for $n \to \infty$, we have that $c_n \sim (1-|\lambda_n|^2)^{\frac{1}{2M}}/2$. Hence
 \begin{align*}
        \frac{2(1-|\lambda_n|^2)}{2^M\pi} \int_{c_n}^{1/2} \frac{1}{t^{2M}} \frac{dt}{\sqrt{1-t^2}} 
&\simeq \frac{2(1-|\lambda_n|^2)}{2^M\pi(2M-1)} \left(\frac{1}{(1-|\lambda_n|^2)^{(2M-1)/(2M)}}- 2^{2M-1}  \right)\\
 &\simeq (1-|\lambda_n|^2)^{1/(2M)}
   \end{align*}
Then it is immediate that the first integral is asymptotic to $C(1-|\lambda_n|^2)^{\frac{1}{2M}}$ as $n \to \infty$.
Since the denominator of the second integral in \eqref{Cond(ii)} is comparable to a constant, this second integral behaves like $(1-|\lambda_n|^2)$, which is summable by the Blaschke condition. We thus obtain condition (ii) of the theorem:
\[
  \sum \bE[Y_n]\simeq  \sum_{n \in \bN} \Big((1-|\lambda_n|^2)^{\frac{1}{2M}}+(1-|\lambda_n|^2)\Big) \simeq \sum_{n \in \bN} (1-|\lambda_n|^2)^{\frac{1}{2M}}<+\infty.
\]

    In order to check condition (iii), it remains to estimate $\bV[Y_n]$. Note that by our previous discussions, whenever $\vartheta \in [\arccos(u_n),2\pi-\arccos(u_n)]$ then 
    \[
    \frac{1-|\lambda_n|^2}{|1-\lambda_n|^{2M}} = \frac{1-|\lambda_n|^2}{|1-|\lambda_n|e^{i\vartheta}|^{2M}} \leq 1.
    \]
    With this in mind, we obtain
    \begin{align*}
        \bE[Y_n^2] & = \frac{1}{2\pi}\int_{\arccos(u_n)}^{2\pi-\arccos(u_n)} \Bigg( \frac{1-|\lambda_n|^2}{|1-|\lambda_n|e^{i\vartheta}|^{2M}} \Bigg)^2 d\vartheta \\
        & \leq \frac{1}{2\pi}\int_{\arccos(u_n)}^{2\pi-\arccos(u_n)} \frac{1-|\lambda_n|^2}{|1-|\lambda_n|e^{i\vartheta}|^{2M}} d\vartheta \\
        & = \bE[Y_n],
    \end{align*}
    which yields
    \[
    \bV[Y_n] \leq \bE[Y_n](1-\bE[Y_n]) \simeq \bE[Y_n].
    \]
Consequently
\[
 \sum \bV[Y_n]\le \sum \bE[Y_n]<+\infty,
\]
which is condition (iii).
    By Theorem \ref{kolmogorov 3-series} we conclude that
    \[
    \sum_{n\in\bN} X_n=\sum_{n \in \bN} \frac{1-|\lambda_n|^2}{|\zeta-\lambda_n|^{2M}} < \infty \quad \text{almost surely.}
    \]
\end{proof}

\begin{proof}[Proof of Theorem \ref{main_random}]
     Suppose first that $\sum_n (1-|\lambda_n|^2)^{\frac{1}{2M}} < \infty$.
From \cite{cochran90}*{Theorem 2} and \cite{rudowicz94}*{Theorem p.160} we know that a random sequence $\Lambda$ satisfies the Carleson condition almost surely if and only if
    \[
    \sum_{k \in \bN} N_k^2 2^{-k} < \infty.
    \]
       Since $M \geq 1$, then $((1-|\lambda_n|^2)^{\frac{1}{2M}})_{n \in \bN} \in \ell^1$ implies $(N_k 2^{-k/2})_{k \in \bN} \in \ell^1$. Consequently $(N_k 2^{-k/2})_{k \in \bN} \in \ell^2$, so the random sequence $\Lambda$ almost surely satisfies the Carleson condition. 

Moreover, Lemma \ref{random lemma} implies that for arbitrary
$\zeta \in \bT$
    \[
    \sum_{n \in \bN} \frac{1-|\lambda_n|^2}{|\zeta-\lambda_n|^{2M}} < \infty \quad \text{almost surely.}
    \]
By hypothesis, $m_j\le M$, $1\le j \le l$ so that 
   \[
    \sum_{n \in \bN} \frac{1-|\lambda_n|^2}{|\zeta_j-\lambda_n|^{2m_j}} < \infty \quad \text{almost surely for }1\le j\le l.
    \]
We conclude with Theorem \ref{main_thm}, that $\Lambda$ is almost surely interpolating (multiplier or universal).
\\

If $\sum (1-|\lambda_n|^2)^{\frac{1}{2M}} = \infty$, then, applying Lemma \ref{random lemma} once more, there exists $r$, $1\leq r \leq l$ such that
    \[
    \sum_{n \in \bN} \frac{1-|\lambda_n|^2}{|\zeta_{r}-\lambda_n|^{2m_{r}}} = \infty \quad \text{almost surely.}
    \]
    This means that \eqref{main_sum} does almost surely not hold and so we get
    \[
    \bP(\Lambda \text{ is interpolating for } \cH(b))=0.
    \]
\end{proof}

\bibliographystyle{habbrv}
\bibliography{biblio}

\end{document}